\numberwithin{equation}{section}
\numberwithin{figure}{section}
\theoremstyle{plain}
\theoremstyle{plain}
\theoremstyle{definition}
\newtheorem{prop}{Proposition}[section]
\newtheorem{example}{Example}[prop]
\theoremstyle{plain}
\theoremstyle{plain}
\newtheorem*{prop*}{\protect\propositionname}
\theoremstyle{remark}
\newtheorem{rem}{Remark}[section]
\theoremstyle{plain}
\newtheorem{cor}{Corollary}[prop]
\providecommand{\propositionname}{Proposition}
\providecommand{\theoremname}{Theorem}
\begin{document}
\title{A curious multisection identity by index factorization}
\author{C. Vignat
\and 
M. Milgram\\
\\ {{Department of Mathematics, Tulane University, cvignat@tulane.edu}}\\
 Consulting Physicist, Geometrics Unlimited, Ltd., mike@geometrics-unlimited.com}
\begin{abstract}
This manuscript introduces a general multisection identity expressed
equivalently in terms of infinite double products and/or infinite double
series, from which several new product or summation identities involving special functions
including Gamma, hyperbolic trigonometric, polygamma and zeta functions, are derived.
It is shown that a parameterized version of this multisection identity
exists, a specialization of which coincides with the standard multisection
identity.
\end{abstract}

\maketitle

MSC Categories:  11Y05; 11Y40; 05E99; 40C99

\section{Introduction}

In the recent article \cite{Milgram}, the second author derived the
curious identity
\begin{equation}
\prod_{j\ge1}\left(\frac{\tan\left(\frac{a}{2^{j}}\right)}{\frac{a}{2^{j}}}\right)^{2^{j-1}}=\frac{a}{\sin a}\label{eq:tan}
\end{equation}
along with several other identities of the same flavor. 
Upon closer inspection, the first author noticed that this identity is
the specialization of a more general relationship that can be stated as
the even more curious identity
\begin{equation}
\prod_{j\ge0,n\ge1}\left(\frac{a_{\left(2n-1\right).2^{j}}}{a_{\left(2n\right).2^{j}}}\right)^{2^{j}}=\prod_{m\ge1}a_{m}\label{eq:general}
\end{equation}
that holds for any sequence of entities $\left\{ a_{m}\right\} $
such that the infinite product $\prod_{m\ge1}a_{m}$ is convergent. 
In \eqref{eq:general} we have explicitly written each index $m$ as the product of the two components of its factored form. That is, $m=(2n).2^j$ or $m=(2n-1).2^j$
according to whether $m$ is respectively even or odd (see below). 
The identity (\ref{eq:tan}) is the specialization $a_{m}=\left(1-\frac{a^{2}}{m^{2}\pi^{2}}\right)^{-1}$
of (\ref{eq:general}) ( see \eqref{TwoJ} and \eqref{Ls2} below). Identity (\ref{eq:general}) can be viewed as a \textit{ structural
identity}: it holds as a consequence of one way the
terms are grouped by the components of the factorization of their index in the product, rather than of the specific values of the individual elements $a_m$.

Other examples of structural identities are: 
\begin{itemize}
\item for an arbitrary sequence $\left\{ a_{m}\right\},$
\begin{equation}
\prod_{m\ge1}a_{2m-1}a_{2m}=\prod_{m\ge1}a_{m},
\end{equation}
which is a simple application of the dissection principle;
\item for an arbitrary summable double sequence $\left\{ a_{m,n}\right\}_{\left(m,n\right)\in\mathbb{Z}^{2}} $,
\begin{equation}
\sum_{\left(m,n\right)\in\mathbb{Z}^{2}}a_{m,n}=\sum_{m<n}a_{m,n}+\sum_{m>n}a_{m,n}+\sum_{m\in\mathbb{Z}}a_{m,m}
\end{equation}
corresponding to an obvious multisection of the two-dimensional lattice. For similar additive rather than multiplicative multisections, see \cite{beauregard_dobrushkin_2016} and \cite{Somos}.
\end{itemize}
In the following, each extension or specialization of (\ref{eq:general}) will be provided along
with its double series equivalent expression; in the case of (\ref{eq:general}),
this is
\begin{equation}
\sum_{j\ge0,n\ge1}2^{j}b_{\left(2n-1\right).2^{j}}-2^{j}b_{\left(2n\right).2^{j}}=\sum_{m\ge1}b_{m},
\end{equation}
where $b_{m}$ is an arbitrary summable sequence.

Let us now introduce some further notation:
\begin{itemize}
    \item the $b-$valuation $\nu_{b}\left(m\right)$ of a positive integer
$m$ is defined as the integer 
\begin{equation}
\label{valuation}
\nu_{b}\left(m\right)=\max\left\{ k\in\mathbb{N}:m=0\mod b^{k}\right\}.
\end{equation}
representing the largest exponent in the factorization of the integer $m$ relative to an integer $b$, (not to be confused with any of the element(s) $b_{m}$,) which will be referred to as the \textit{base}.
For example:
\begin{itemize}
    \item {$\nu_2(3)=0,$ because $3=3.2^{0}$ is not an integral multiple of $2$;}
    \item{$\nu_3(6)=1$ because $1$ is the largest exponent of $3$ in the factorization $6=2.3^{1}$;}
    \item{ $\nu_5(100)=2$ because of the decomposition $100=4.5^2$ relative to base $5$}
    \item{ and $\nu_2(100)=2$ because of the decomposition $100=25.2^2$ relative to base 2.}
\end{itemize}
 
Notice that as a consequence of this definition, for any positive integer $m,$ there is a unique representation
\begin{equation}
\label{representation}
m=n.b^{\nu_{b}\left(m\right)},\,\,b \nmid n. 
\end{equation}

\item
For a multiset $S$ that contains the integer $m$ repeated $j$ times, we use the notation
\begin{equation}
S= \{\dots,m^{(j)},\dots \}
\end{equation}
\end{itemize}
All identities in this article are based on the following principle:
define the multisets $C_b$ and $D_b$
as 
\begin{equation}
C_b=\cup_{0< k < b}\{\left(bn-k\right).b^j, n\ge1,j\ge0\}
\end{equation}  
and 
\begin{equation}
D_b=\{\left(bn\right).b^j, n\ge1,j\ge0\}.
\end{equation}
For example, in the case $b=2$, $C_{2}$ collects all integers, and 
\begin{equation}
D_{2}=
\{ 2,4^{(2)},6,8^{(3)},10,12^{(2)},14,16^{(4)},\dots \}.
\end{equation}
In the case $b=3$, $C_3$ collects all integers
and
\begin{equation}
D_3=\{3,6,9^{(2)},12,15,18^{(2)},21,24,27^{(3)},\dots\}.
\end{equation}
Defining the multiset
\begin{equation}
E_b=\{m^{(\nu_b\left(m\right))},m\in \mathbb{N}\},
\end{equation}
we compute
\begin{equation}
E_2=\{m^{(\nu_2\left(m\right))},m\in \mathbb{N}\} = \{ 2,4^{(2)},6,8^{(3)},10,12^{(2)},14,16^{(4)},\dots\}
\end{equation}
and
\begin{equation}
E_3=\{m^{(\nu_3\left(m\right))},m\in \mathbb{N}\} = \{3,6,9^{(2)},12,15,18^{(2)},21,24,27^{(3)},\dots\},
\end{equation}
and it appears that $D_2 =E_2$ and $D_3=E_3$ while\footnote{Notice that the equality $C_b=\mathbb{N}$ is a simple consequence of the fact that each set $\{\left(bn-k\right).b^j, n\ge1,j\ge0\}$
is the set of integers having, in their base $b$ representation, $j$ trailing zeros and their $j+1$st digit equal to $b-k$. Hence these sets form a partition of $\mathbb{N}.$} $C_2=C_3=\mathbb{N}.$

This result is in fact true for any base $b$:
as will be shown in Section \ref{sec:A-general-case}, for an arbitrary base $b\ge2,$
\begin{equation}
C_b=\mathbb{N} \text{ and } 
D_b = E_b.\end{equation}
More precisely, any integer $m\ge1$ appears once  in $C_b$ (i.e. $j=\nu_b\left(m\right)$) and $\nu_b\left(m\right)$ times \footnote{if $\nu_b\left(m\right)=0$, this means that $m$ does not appear in the multiset $D_{b}$}  in $D_b$ (once for each value of $j$ such that $0 \le j \le \nu_{b}\left(m\right)-1$).  

As a consequence, for two arbitrary functions $\varphi$ and $\chi$ such that the following (creatively chosen) infinite products exist,  we have the general formula (see \eqref{main_multiplicative} below)
\begin{equation}
\label{product_intro}
\prod_{j\ge0,n\ge1}\left(\prod_{k=1}^{b-1}a_{\left(nb-k\right).b^{j}}^{\varphi\left(j\right)}\right)a_{\left(nb\right).b^{j}}^{\chi\left(j\right)}=\prod_{m\ge1}a_{m}^{\varphi\left(\nu_{b}\left(m\right)\right)+\sum_{k=0}^{\nu_{b}\left(m\right)-1}\chi\left(k\right)}
\end{equation}
whose validity follows from the equality of the collected exponent of each element $a_{m}$ on both sides of the identity. 
Since this identity is structural, i.e. a consequence of 
the fact that the two multisets of indices $C_b \cup D_b$ and $\mathbb{N}\cup E_b$ coincide, it also translates into a sum form\footnote{ this remark allows us to obtain a sum form without assuming that $b_m= \log(a_m)$}: for an arbitrary sequence $b_m$ such that the following sums exist, it is given by (see \eqref{main_additive} below)
\begin{equation}
\label{sum_intro}
\sum_{j\ge0,n\ge1}\left(\left(\sum_{k=1}^{b-1}\varphi\left(j\right)b_{\left(nb-k\right).b^{j}}\right)+\chi\left(j\right)b_{\left(nb\right).b^{j}}\right)=\sum_{m\ge1}\left(\varphi\left(\nu_{b}\left(m\right)\right)+\sum_{k=0}^{\nu_{b}\left(m\right)-1}\chi\left(k\right)\right)b_{m}.
\end{equation}

Different choices of the functions $\varphi$ and $\chi,$ for example such that 
\begin{equation}
\varphi\left(\nu_{b}\left(m\right)\right)+\sum_{k=0}^{\nu_{b}\left(m\right)-1}\chi\left(k\right)=1
\end{equation}
produce the identities that will be studied in this article.

We close this introduction by noting that: (i) the product form
\begin{equation}
\prod_{n \in C_b \cup D_b} a_n = \prod_{n \in \mathbb{N} \cup E_b} a_n
\end{equation}
and the sum form
\begin{equation}
\sum_{n \in C_b \cup D_b} b_n = \sum_{n \in \mathbb{N} \cup E_b} b_n
\end{equation} 
  in formulas \eqref{product_intro} and \eqref{sum_intro} respectively can be replaced by any symmetric form, such as for example
\begin{equation}
\sum_{\underset{n_1,n_2\in C_b \cup D_b}{n_1<n_2}}b_{n_1}b_{n_2}
= \sum_{\underset{n_1,n_2\in \mathbb{N} \cup E_b}{n_1<n_2}}b_{n_1}b_{n_2}
\end{equation} 
and (ii) at a fundamental level we are effectively introducing a form of multiplicative telescoping {\footnote{cancellation between different elements in a product}} similar to that noted in \cite[Eq. (2.40)]{Milgram}.


\section{A first generalization}
Playing with (\ref{eq:general}) suggested the  following generalization.
\begin{prop}
For an arbitrary value $q\in\mathbb{C}$ and for an arbitrary sequence
$\left\{ a_{m}\right\} $ such that $\prod_{m\ge1}a_{m}$ exists,
we have
\begin{equation}
\prod_{j\ge0,n\ge1}\left(\frac{a_{\left(2n-1\right).2^{j}}}{a_{\left(2n\right).2^{j}}^{q-1}}\right)^{q^{j}}=\prod_{m\ge1}a_{m}\label{eq:q-case}
\end{equation}
and its series version:  for an arbitrary sequence
$\left\{ b_{m}\right\} $ such that $\sum_{m\ge1}b_{m}$ exists,
\begin{equation}
\sum_{j\ge0,n\ge1}q^{j}b_{\left(2n-1\right).2^{j}}-q^{j}\left(q-1\right)b_{\left(2n\right).2^{j}}=\sum_{m\ge1}b_{m}.
\label{q-caseLn}
\end{equation}
\end{prop}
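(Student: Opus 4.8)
The plan is to prove this as a \emph{structural} identity in the sense described in the Introduction: both sides are products (resp.\ sums) over multisets of indices, and it suffices to check that every $a_{m}$ (resp.\ $b_{m}$) is collected with total exponent (resp.\ coefficient) $1$ on the left-hand side. Concretely I would invoke the principle behind \eqref{product_intro}--\eqref{sum_intro} with $b=2$ and the choices $\varphi(j)=q^{j}$, $\chi(j)=-(q-1)q^{j}$: then the left side of \eqref{eq:q-case} is exactly $\prod_{j\ge0,n\ge1}a_{(2n-1)2^{j}}^{\varphi(j)}a_{(2n)2^{j}}^{\chi(j)}$ and the left side of \eqref{q-caseLn} is its additive analogue. (When $q=2$ this degenerates to \eqref{eq:general}, a useful sanity check.)

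First I would record the bookkeeping for a fixed $m\ge1$ with $\nu=\nu_{2}(m)$. The index $m$ occurs in ``numerator position'' $(2n-1)2^{j}$ only for $j=\nu$ (oddness of $2n-1$ forces $j=\nu_{2}(m)$) and a unique $n$, contributing $a_{m}^{q^{\nu}}$; it occurs in ``denominator position'' $(2n)2^{j}=m$ precisely for $j=0,1,\dots,\nu-1$, contributing $\prod_{j=0}^{\nu-1}a_{m}^{-(q-1)q^{j}}$. Writing $(q-1)q^{j}=q^{j+1}-q^{j}$, this last product telescopes to $a_{m}^{-(q^{\nu}-1)}$, an identity valid for \emph{every} $q\in\mathbb{C}$ (including $q=1$); hence the total contribution of $a_{m}$ is $a_{m}^{q^{\nu}}a_{m}^{-(q^{\nu}-1)}=a_{m}$, and likewise the total coefficient of $b_{m}$ in \eqref{q-caseLn} is $q^{\nu}-(q^{\nu}-1)=1$. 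This is precisely the multiplicative telescoping alluded to in point (ii) of the Introduction.

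To turn this count into an equality of convergent objects I would use an exhaustion argument. For $N\ge1$ let $Q_{N}$ be the product of the factors $a_{(2n-1)2^{j}}^{q^{j}}$ over all $(j,n)$ with $(2n-1)2^{j}\le N$ together with the factors $a_{(2n)2^{j}}^{-(q-1)q^{j}}$ over all $(j,n)$ with $(2n)2^{j}\le N$; by the bookkeeping above, $Q_{N}=\prod_{m=1}^{N}a_{m}$ \emph{exactly} for every $N$, so $Q_{N}\to\prod_{m\ge1}a_{m}$, and the analogous partial sums $S_{N}=\sum_{m=1}^{N}b_{m}$ handle \eqref{q-caseLn}.

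The step I expect to be the main obstacle is reconciling this exhaustion limit with the iterated product $\prod_{j\ge0}\bigl(\prod_{n\ge1}\cdots\bigr)$ as literally written in \eqref{eq:q-case} --- a Fubini/rearrangement matter that is genuinely delicate when $|q|>1$, since the weights $q^{j}$ grow in $j$ and one needs the tail behaviour $a_{m}\to1$ (resp.\ $b_{m}\to0$) to dominate $|q|^{\nu_{2}(m)}$ for the inner products/sums to converge and for the order to be immaterial. I would therefore either add such a growth/decay hypothesis explicitly, or, in keeping with the phrasing ``such that $\prod_{m\ge1}a_{m}$ exists,'' interpret \eqref{eq:q-case}--\eqref{q-caseLn} in the structural (exhaustion) sense, in which case the exponent count above already completes the proof.
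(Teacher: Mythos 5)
Your proof is correct and follows essentially the same route as the paper: the structural index count of Section 3 specialized to $\varphi(j)=q^{j}$, $\chi(j)=-(q-1)q^{j}$, where each $a_{m}$ receives exponent $q^{\nu_{2}(m)}$ from its unique occurrence at an odd-times-$2^{j}$ position and $-(q^{\nu_{2}(m)}-1)$ from the telescoping denominator occurrences at $j=0,\dots,\nu_{2}(m)-1$. Your exhaustion argument via $Q_{N}$ and the caveat about rearrangement when $|q|>1$ add rigor that the paper leaves implicit, but they do not change the underlying argument.
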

A proof of this identity  
is provided
in Section \ref{sec:A-general-case} below.
A proof in the particular case $b_{m}=t^{m}$ can be found in
Section \ref{sec:A-generating-functional} below. 
Specializations of identity (\ref{eq:q-case}) are given next.
\begin{example}
For $q=0,$ identity (\ref{eq:q-case}) reduces to the usual dissection
identity
\begin{equation}
\prod_{n\ge1}a_{2n-1}a_{2n}=\prod_{n\ge1}a_{n}.
\end{equation}
\end{example}

\begin{proof}
Notice that 
\begin{equation}
q^{j}=\begin{cases}
1 & \text{if }j=0\\
0 & \text{else}
\end{cases}
\end{equation}
so that the right-hand side reduces to 
\begin{equation}
\prod_{j\ge0,n\ge1}\left(\frac{a_{\left(2n-1\right).2^{j}}}{a_{\left(2n\right).2^{j}}^{q-1}}\right)^{q^{j}}=\prod_{n\ge1}\frac{a_{\left(2n-1\right)}}{a_{2n}^{-1}}=\prod_{n\ge1}a_{2n-1}a_{2n}=\prod_{n\ge1}a_{n}.
\end{equation}
\end{proof}
\begin{rem}
The fact that we recover the usual dissection identity for $q=0$
shows that identity (\ref{eq:q-case}) can be considered as a parameterized
version of the usual dissection formula.
\end{rem}

\begin{example} \label{eq:X4}
The case $q=1$ of identity (\ref{eq:q-case}) produces
\begin{equation}
\prod_{j\ge1,n\ge1}a_{\left(2n-1\right).2^{j}}=\prod_{n\ge1}a_{2n}.
\end{equation}
\end{example}

\begin{proof}
Start from
\begin{align*}
\prod_{m\ge1}a_{m}=\prod_{j\ge0,n\ge1}\left(\frac{a_{\left(2n-1\right).2^{j}}}{a_{2n.2^{j}}^{q-1}}\right)^{q^{j}} & =\prod_{j\ge0,n\ge1}a_{\left(2n-1\right).2^{j}}\\
 & =\prod_{n\ge1}a_{2n-1}\prod_{j\ge1,n\ge1}a_{\left(2n-1\right).2^{j}}
\end{align*}
from which we deduce
\begin{equation}
\prod_{j\ge1,n\ge1}a_{\left(2n-1\right).2^{j}}=\prod_{n\ge1}a_{2n}.
\end{equation}
\end{proof}
This result is interpreted as follows: consider an arbitrary even
number $m$. By \eqref{representation},
there is a unique way to write 
\begin{equation}
m=p.2^{\nu_{2}\left(m\right)}
\end{equation}
with $p$ an odd number: first compute its valuation $\nu_{2}\left(m\right)$ according to \eqref{valuation} and
then consider
\begin{equation}
p=\frac{m}{2^{\nu_{2}\left(m\right)}}
\end{equation}
which by definition is an odd number. 
\begin{example}
In the case $q=-1$, (\ref{eq:q-case}) produces
\begin{equation}
\prod_{j\ge0,n\ge1}\frac{a_{\left(2n-1\right).2^{2j}}a_{2n.2^{2j}}^{2}}{a_{\left(2n-1\right).2^{2j+1}}a_{2n.2^{2j+1}}^{2}}=\prod_{m\ge1}a_{m}.
\end{equation}
\end{example}

\begin{proof}
For $q=-1,$ we have
\begin{equation}
\prod_{j\ge0,n\ge1}\left(\frac{a_{\left(2n-1\right).2^{j}}}{a_{2n.2^{j}}^{q-1}}\right)^{q^{j}}=\prod_{j\ge0,n\ge1}\left(a_{\left(2n-1\right).2^{j}}a_{2n.2^{j}}^{2}\right)^{\left(-1\right)^{j}}.
\end{equation}
Separating the terms with even values of $j$ (with a $+1$ exponent)
from those with odd values of $j$ (with a $-1$ power) produces the
result.
\end{proof}
The results can be extended to the base 3 case as follows.
\begin{cor}
For any $q\in\mathbb{C},$
\begin{equation}
\prod_{n\ge1,j\ge0}\left(\frac{a_{\left(3n-1\right).3^{j}}a_{\left(3n-2\right).3^{j}}}{a_{3n.3^{j}}^{q-1}}\right)^{q^{j}}=\prod_{m\ge1}a_{m}
\end{equation}
The case $q=1$ produces 
\begin{equation}
\prod_{n\ge1,j\ge0}a_{\left(3n-1\right).3^{j}}a_{\left(3n-2\right).3^{j}}=\prod_{m\ge1}a_{m}\label{eq:linear}
\end{equation}
from which we deduce the identity
\begin{equation}
\prod_{n\ge1,j\ge1}a_{\left(3n-1\right).3^{j}}a_{\left(3n-2\right).3^{j}}=\prod_{m\ge1}a_{3m}.
\label{X5b3}
\end{equation}
The case $q=0$ produces, as previously, the usual multisection
identity 
\begin{equation}
\prod_{n\ge1}a_{3n}a_{3n-1}a_{3n-2}=\prod_{m\ge1}a_{m}.
\end{equation}
\end{cor}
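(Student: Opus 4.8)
The plan is to establish the general‑$q$ base‑$3$ identity by the same index‑factorization bookkeeping that produces \eqref{product_intro}, and then obtain the three displayed specializations by substitution. Concretely, I would fix an integer $m\ge1$ and use the unique base‑$3$ representation \eqref{representation}, $m=p\cdot3^{\nu_3(m)}$ with $3\nmid p$, to collect the total exponent of $a_m$ on the left‑hand side. Since $3\nmid p$ we have $p\equiv1$ or $2\pmod3$, so $p=3n-2$ or $p=3n-1$ for a unique $n\ge1$; hence $a_m$ occurs among the numerator factors $a_{(3n-1)3^j},a_{(3n-2)3^j}$ exactly once — uniqueness of the factorization forces $j=\nu_3(m)$ — contributing exponent $q^{\nu_3(m)}$. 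On the denominator side, $a_m$ equals $a_{(3n)3^j}=a_{n\cdot3^{j+1}}$ precisely when $3^{j+1}\mid m$, i.e.\ for $j=0,1,\dots,\nu_3(m)-1$ (taking $n=m/3^{j+1}$), each such occurrence contributing exponent $-(q-1)q^{j}$. Summing the geometric series gives the total exponent
\[
q^{\nu_3(m)}-(q-1)\sum_{j=0}^{\nu_3(m)-1}q^{j}=q^{\nu_3(m)}-\bigl(q^{\nu_3(m)}-1\bigr)=1,
\]
valid also at $q=1$, where the denominator disappears and the sum is empty, and at $q=0$. Equivalently, the whole computation is the specialization $b=3$, $\varphi(j)=q^{j}$, $\chi(j)=-(q-1)q^{j}$ of the master formula \eqref{product_intro}, whose exponent $\varphi(\nu_b(m))+\sum_{k=0}^{\nu_b(m)-1}\chi(k)$ collapses to $1$; so I would either cite \eqref{product_intro} directly or repeat the three‑line count above, exactly paralleling the proof of \eqref{eq:q-case} given in Section~\ref{sec:A-general-case}.

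For the specializations I would argue as in the base‑$2$ examples. Setting $q=1$, the denominator exponent $q-1$ vanishes and the outer exponents $q^{j}=1$, leaving $\prod_{j\ge0,n\ge1}a_{(3n-1)3^j}a_{(3n-2)3^j}=\prod_{m\ge1}a_m$, which is \eqref{eq:linear}. To deduce \eqref{X5b3} I would split off the $j=0$ block of \eqref{eq:linear}, writing its left side as $\bigl(\prod_{n\ge1}a_{3n-1}a_{3n-2}\bigr)\prod_{n\ge1,j\ge1}a_{(3n-1)3^j}a_{(3n-2)3^j}$, and rewrite its right side by the ordinary trisection $\prod_{m\ge1}a_m=\prod_{m\ge1}a_{3m-2}a_{3m-1}a_{3m}=\bigl(\prod_{n\ge1}a_{3n-1}a_{3n-2}\bigr)\prod_{m\ge1}a_{3m}$; cancelling the common factor $\prod_{n\ge1}a_{3n-1}a_{3n-2}$ yields \eqref{X5b3}. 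For $q=0$ I would use that $q^{j}$ equals $1$ for $j=0$ and $0$ otherwise, so only the $j=0$ block survives, giving $\prod_{n\ge1}a_{3n-2}a_{3n-1}a_{3n}^{-(q-1)}=\prod_{n\ge1}a_{3n-2}a_{3n-1}a_{3n}=\prod_{m\ge1}a_m$, the usual trisection identity.

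The exponent count is routine; the only genuine difficulty — and the main obstacle to full rigor — is legitimizing the rearrangement of infinitely many factors, and, when $|q|>1$, even the convergence of the displayed left‑hand product. This is precisely the technical point deferred to Section~\ref{sec:A-general-case} for the base‑$2$ case, and it transfers verbatim here because $C_3=\mathbb{N}$ and $D_3=E_3$. One small bookkeeping remark I would make explicit: in the denominator index $m=(3n)3^{j}$ the integer $n=m/3^{j+1}$ need not be coprime to $3$ when $j<\nu_3(m)-1$, but this is harmless, since the product runs over all $n\ge1$ and each admissible $j$ in $0\le j\le\nu_3(m)-1$ still contributes a distinct, legitimate occurrence of $a_m$.
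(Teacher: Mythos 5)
Your proposal is correct and follows essentially the same route as the paper: the identity is the specialization $b=3$, $\varphi(j)=q^{j}$, $\chi(j)=(1-q)q^{j}$ of the master formula, proved by the same occurrence count (once in the numerator indices at $j=\nu_3(m)$, and $\nu_3(m)$ times in the denominator indices at $j=0,\dots,\nu_3(m)-1$), and your derivation of \eqref{X5b3} by splitting off the $j=0$ block mirrors the paper's treatment of the analogous base-2 case in Example \ref{eq:X4}. No substantive differences to report.
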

The arbitrary base $b$ case follows.
\begin{prop}
For any integer $n\ge 2$ and any $q\in\mathbb{C},$
\begin{equation}
\prod_{n\ge1,j\ge0}\left(\frac{a_{\left(bn-1\right).b^{j}}a_{\left(bn-2\right).b^{j}}\dots a_{\left(bn-\left(b-1\right)\right).b^{j}}}{a_{bn.b^{j}}^{q-1}}\right)^{q^{j}}=\prod_{m\ge1}a_{m}
\end{equation}
The case $q=1$ produces 
\begin{equation}
\prod_{n\ge1,j\ge0}a_{\left(bn-1\right).b^{j}}a_{\left(bn-2\right).b^{j}}\dots a_{\left(bn-\left(b-1\right)\right).b^{j}}=\prod_{m\ge1}a_{m}\label{eq:linear-1}
\end{equation}
from which we deduce the identity
\begin{equation}
\prod_{n\ge1,j\ge1}a_{\left(bn-1\right).b^{j}}a_{\left(bn-2\right).b^{j}}\dots a_{\left(bn-\left(b-1\right)\right).b^{j}}=\prod_{m\ge1}a_{bm}.
\end{equation}
The case $q=0$ produces, as previously, the usual multisection
identity 
\begin{equation}
\prod_{n\ge1}a_{\left(bn\right).n^{j}}a_{\left(bn-1\right).b^{j}}a_{\left(bn-2\right).b^{j}}\dots a_{\left(bn-\left(b-1\right)\right).b^{j}}=\prod_{m\ge1}a_{m}.
\end{equation}
\end{prop}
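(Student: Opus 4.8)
The plan is to deduce this statement directly from the general structural formula \eqref{product_intro}, choosing the auxiliary functions $\varphi$ and $\chi$ so that the collected exponent of every $a_{m}$ on the right-hand side equals $1$. Concretely, I would take $\varphi(j)=q^{j}$ and $\chi(j)=-(q-1)q^{j}$. With this choice the left-hand side of \eqref{product_intro} becomes
\[
\prod_{j\ge0,n\ge1}\left(\prod_{k=1}^{b-1}a_{(nb-k).b^{j}}^{q^{j}}\right)a_{(nb).b^{j}}^{-(q-1)q^{j}}
=\prod_{j\ge0,n\ge1}\left(\frac{a_{(bn-1).b^{j}}a_{(bn-2).b^{j}}\cdots a_{(bn-(b-1)).b^{j}}}{a_{bn.b^{j}}^{q-1}}\right)^{q^{j}},
\]
which is exactly the left-hand side of the Proposition.

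It then remains to evaluate the exponent $\varphi(\nu_{b}(m))+\sum_{k=0}^{\nu_{b}(m)-1}\chi(k)$ appearing on the right of \eqref{product_intro}. Writing $\nu=\nu_{b}(m)$ and using the finite geometric sum $\sum_{k=0}^{\nu-1}q^{k}=\frac{q^{\nu}-1}{q-1}$ valid for $q\neq1$, this exponent equals
\[
q^{\nu}-(q-1)\sum_{k=0}^{\nu-1}q^{k}=q^{\nu}-(q^{\nu}-1)=1,
\]
independently of $q$ and of $m$; the degenerate value $q=1$ gives the exponent $q^{\nu}=1$ directly (or by continuity in $q$), so in every case the right-hand side of \eqref{product_intro} collapses to $\prod_{m\ge1}a_{m}$, which proves the displayed identity. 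Here the only input beyond this elementary algebra is the multiset identity $C_{b}\cup D_{b}=\mathbb{N}\cup E_{b}$ established in Section \ref{sec:A-general-case}: it is what legitimizes regrouping the double product according to the factored index $m=n\,b^{\nu_{b}(m)}$ of \eqref{representation}, the regrouping being meaningful because $\prod_{m\ge1}a_{m}$ converges. If one preferred a self-contained argument rather than quoting \eqref{product_intro}, the same two facts (``$C_{b}=\mathbb{N}$'' and ``$D_{b}=E_{b}$'') say precisely that each fixed $m$ occurs once among the indices $(bn-k).b^{j}$, necessarily at $j=\nu_{b}(m)$, and $\nu_{b}(m)$ times among the indices $bn.b^{j}$, at $j=0,\dots,\nu_{b}(m)-1$, which reproduces the exponent count above.

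The two special cases follow at once. For $q=1$ one has $q^{j}=1$ and $a_{bn.b^{j}}^{q-1}=1$, so the identity reads $\prod_{n\ge1,j\ge0}\prod_{k=1}^{b-1}a_{(bn-k).b^{j}}=\prod_{m\ge1}a_{m}$; isolating the $j=0$ factor $\prod_{n\ge1}\prod_{k=1}^{b-1}a_{bn-k}=\prod_{b\nmid m}a_{m}$ and dividing it out leaves $\prod_{n\ge1,j\ge1}\prod_{k=1}^{b-1}a_{(bn-k).b^{j}}=\prod_{b\mid m}a_{m}=\prod_{m\ge1}a_{bm}$. For $q=0$ the factor $q^{j}$ vanishes unless $j=0$, so only the $j=0$ term survives, and the exponent $q-1=-1$ turns $a_{bn}^{-(q-1)}$ into $a_{bn}$, giving the classical $b$-section identity $\prod_{n\ge1}a_{bn}a_{bn-1}\cdots a_{bn-(b-1)}=\prod_{m\ge1}a_{m}$.

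I do not expect any real obstacle once \eqref{product_intro} is in hand: the proof is a substitution followed by a one-line geometric-sum collapse. The two points that deserve a moment's care are the case split $q=1$ versus $q\neq1$ in that geometric sum, and — should one bypass \eqref{product_intro} — spelling out the index bookkeeping that identifies $C_{b}\cup D_{b}$ with $\mathbb{N}\cup E_{b}$ together with the matching multiplicities, which is where the ``curious'' cancellation really lives.
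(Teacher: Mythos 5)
Your proposal is correct and follows essentially the same route as the paper: the authors also obtain this Proposition as the specialization $\varphi(j)=q^{j}$, $\chi(j)=(1-q)q^{j}$ of the general formula \eqref{main_multiplicative}, whose proof is exactly the index-counting argument you cite (each $m$ occurs once among the $(bn-k).b^{j}$ at $j=\nu_{b}(m)$ and $\nu_{b}(m)$ times among the $(bn).b^{j}$), with the same geometric-sum collapse to exponent $1$ and the same treatment of the $q=1$ and $q=0$ cases. No gaps; your explicit handling of $q=1$ in the geometric sum is a detail the paper leaves implicit.
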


\section{A general case\label{sec:A-general-case}}

A general version of identity (\ref{eq:general}) is given next.
\begin{prop}
For two functions $\varphi$ and $\chi$, and with $\nu_{2}\left(m\right)$
being the $2-$valuation of $m,$ assuming that the infinite products are
convergent, then
\begin{equation}
\prod_{j\ge0,n\ge1}a_{\left(2n-1\right).2^{j}}^{\varphi\left(j\right)}a_{\left(2n\right).2^{j}}^{\chi\left(j\right)}=\prod_{m\ge1}a_{m}^{\varphi\left(\nu_{2}\left(m\right)\right)+\sum_{k=0}^{\nu_{2}\left(m\right)-1}\chi\left(k\right)}
\end{equation}
or equivalently
\begin{equation}
\sum_{j\ge0,n\ge1}\varphi\left(j\right)b_{\left(2n-1\right).2^{j}}+\chi\left(j\right)b_{\left(2n\right).2^{j}}=\sum_{m\ge1}\left(\varphi\left(\nu_{2}\left(m\right)\right)+\chi\left(0\right)+\dots+\chi\left(\nu_{2}\left(m\right)-1\right)\right)b_{m}.
\end{equation}

The base 3 case is 
\begin{equation}
\prod_{j\ge0,n\ge1}a_{\left(3n-2\right).3^{j}}^{\varphi\left(j\right)}a_{\left(3n-1\right).3^{j}}^{\varphi\left(j\right)}a_{\left(3n\right).3^{j}}^{\chi\left(j\right)}=\prod_{m\ge1}a_{m}^{\varphi\left(\nu_{3}\left(m\right)\right)+\sum_{k=0}^{\nu_{3}\left(m\right)-1}\chi\left(k\right)}
\end{equation}
or
\begin{equation}
\sum_{j\ge0,n\ge1}\varphi\left(j\right)b_{\left(3n-2\right).3^{j}}+\varphi\left(j\right)b_{\left(3n-1\right).3^{j}}+\chi\left(j\right)b_{\left(3n\right).3^{j}}=\sum_{m\ge1}\left(\varphi\left(\nu_{3}\left(m\right)\right)+\chi\left(0\right)+\dots+\chi\left(\nu_{3}\left(m\right)-1\right)\right)b_{m}.
\end{equation}
and the arbitrary base $b$ case, with $b\ge2,$ is
\begin{equation}
\label{main_multiplicative}
\prod_{j\ge0,n\ge1}\left(\prod_{k=1}^{b-1}a_{\left(nb-k\right).b^{j}}^{\varphi\left(j\right)}\right)a_{\left(nb\right).b^{j}}^{\chi\left(j\right)}=\prod_{m\ge1}a_{m}^{\varphi\left(\nu_{b}\left(m\right)\right)+\sum_{k=0}^{\nu_{b}\left(m\right)-1}\chi\left(k\right)}
\end{equation}
or equivalently
\begin{equation}
\label{main_additive}
\sum_{j\ge0,n\ge1}\left(\left(\sum_{k=1}^{b-1}\varphi\left(j\right)b_{\left(nb-k\right).b^{j}}\right)+\chi\left(j\right)b_{\left(nb\right).b^{j}}\right)=\sum_{m\ge1}\left(\varphi\left(\nu_{b}\left(m\right)\right)+\sum_{k=0}^{\nu_{b}\left(m\right)-1}\chi\left(k\right)\right)b_{m}.
\end{equation}
\end{prop}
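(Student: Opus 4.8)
The plan is to prove \eqref{main_multiplicative} (and, reading exponents as coefficients, the additive twin \eqref{main_additive}) purely \emph{structurally}: verify that every factor $a_m$ carries the same total exponent on the two sides. Since no individual value of $a_m$ enters, this handles the multiplicative and additive forms at once, with no need to set $b_m=\log a_m$. First I would record the two set-theoretic facts promised in this section: (i) $C_b=\mathbb{N}$, i.e. the map $(n,k,j)\mapsto(nb-k)b^{j}$ with $n\ge1$, $1\le k\le b-1$, $j\ge0$ is a bijection onto $\mathbb{N}$; and (ii) $D_b=E_b$, i.e. for fixed $m$ the equation $m=(nb)b^{j}$ with $n\ge1$, $j\ge0$ has exactly $\nu_b(m)$ solutions, one for each $j\in\{0,1,\dots,\nu_b(m)-1\}$.

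Granting (i) and (ii), the exponent bookkeeping is immediate. On the left of \eqref{main_multiplicative}, $a_m$ occurs in the $\varphi$-block $\prod_{k=1}^{b-1}a_{(nb-k)b^{j}}^{\varphi(j)}$ exactly when $m=(nb-k)b^{j}$; by (i) this happens for a single triple $(n,k,j)$, and there $j=\nu_b(m)$ necessarily, since $nb-k\equiv-k\not\equiv0\pmod b$ forces $b\nmid(nb-k)$, so $\varphi$ contributes the exponent $\varphi(\nu_b(m))$. Likewise $a_m$ occurs in the $\chi$-block $a_{(nb)b^{j}}^{\chi(j)}$ exactly when $m=nb^{j+1}$; by (ii) this occurs once for each $j=0,1,\dots,\nu_b(m)-1$ (with $n=m/b^{j+1}$), contributing $\sum_{k=0}^{\nu_b(m)-1}\chi(k)$ in total. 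Adding the two contributions gives the exponent $\varphi(\nu_b(m))+\sum_{k=0}^{\nu_b(m)-1}\chi(k)$ displayed on the right, for every $m\ge1$; the $b=2$ and $b=3$ cases are then literally the specializations $b=2,3$.

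To settle (i) and (ii): for (i), write $m\ge1$ in base $b$, let $j$ be its number of trailing zeros so $m/b^{j}$ has units digit $d\in\{1,\dots,b-1\}$, and put $k=b-d\in\{1,\dots,b-1\}$, $n=(m/b^{j}+k)/b\ge1$; then $m=(nb-k)b^{j}$, and $j,k,n$ are forced by $m$, so the representation is unique — this is the footnote's partition remark and coincides with \eqref{representation}. For (ii), $nb^{j+1}=m$ forces $b^{j+1}\mid m$, i.e. $j+1\le\nu_b(m)$ (using \eqref{valuation}), and then $n=m/b^{j+1}$ is determined; letting $j$ run from $0$ to $\nu_b(m)-1$ gives the claimed multiplicity (vacuous when $\nu_b(m)=0$), which is exactly $D_b=E_b$. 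The argument for \eqref{main_additive} is word-for-word the same, collecting the coefficient of $b_m$.

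The one genuinely analytic point — and the only place I expect care to be needed — is justifying that the doubly-indexed product (respectively series) may be rearranged into $\prod_{m\ge1}a_m^{\cdots}$: the matching above shows the two sides agree as formal groupings, but equality of the \emph{limits} requires the convergence hypothesis of the statement (absolute convergence of $\sum_m b_m$, equivalently of $\sum_m\log a_m$ under a fixed branch). I would make this precise by fixing a cutoff $M$, collecting on both sides all factors $a_m$ with $m\le M$ plus a remainder coming from triples whose own index exceeds $M$ but whose partner index is $\le M$, and letting $M\to\infty$; the remainder vanishes by the assumed convergence. Everything else is the routine index accounting above.
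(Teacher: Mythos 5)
Your proposal is correct and follows essentially the same route as the paper: the paper's proof is exactly the observation that $a_m$ appears once in the $\varphi$-block (at $j=\nu_b(m)$) and $\nu_b(m)$ times in the $\chi$-block (at $j=0,\dots,\nu_b(m)-1$), so the cumulative exponent of each $a_m$ matches on both sides. Your additional verification of the two index-set facts and your remarks on rearrangement under the convergence hypothesis are more careful than the paper's terse argument, but they do not constitute a different method.
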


\begin{proof}
The term $a_{m}$ appears once in the sequence 
\begin{equation}
\left\{ a_{\left(2n-1\right).2^{j}}\right\} _{j\ge0,n\ge1}
\end{equation}
for $j=\nu_{2}\left(m\right)$ and appears $\nu_{2}\left(m\right)$
times in the sequence
\begin{equation}
\left\{ a_{\left(2n\right).2^{j}}\right\} _{j\ge0,n\ge1}
\end{equation}
for $j=0,1,\dots,\nu_{2}\left(m\right)-1$ successively and therefore appears with equal cumulative exponent (or coefficient) on both sides of \eqref{main_multiplicative} or \eqref{main_additive}.  More generally, the term $a_{m}$ appears once in the sequence 
\begin{equation}
\left\{ a_{\left(bn-k\right).b^{j}}\right\} _{j\ge0,n\ge1}
\end{equation}
for $j=\nu_{b}\left(m\right)$ and appears $\nu_{b}\left(m\right)$
times in the sequence 
\begin{equation}
\left\{ a_{\left(bn\right).b^{j}}\right\} _{j\ge0,n\ge1}
\end{equation}
for $j=0,1,\dots,\nu_{b}\left(m\right)-1$ successively.
\end{proof}
\begin{example}
Here are a few specializations of the previous formula
\end{example}
\begin{enumerate}
\item In the case $\varphi\left(j\right)=\chi\left(j\right)=j$,
\begin{equation}
\prod_{j\ge0,n\ge1}\left(a_{\left(2n-1\right).2^{j}}a_{\left(2n\right).2^{j}}\right)^{j}=\prod_{m\ge1}a_{m}^{\frac{\nu_{2}\left(m\right)\left(\nu_{2}\left(m\right)+1\right)}{2}}
\end{equation}
or equivalently
\begin{equation}
\sum_{j\ge0,n\ge1}j\left(b_{\left(2n-1\right).2^{j}}+b_{\left(2n\right).2^{j}}\right)=\sum_{m\ge1}\frac{\nu_{2}\left(m\right)\left(\nu_{2}\left(m\right)+1\right)}{2}b_{m}.
\checked\end{equation}
\item In the case $\varphi\left(j\right)=j,\chi\left(j\right)=2j$, 
\begin{equation}
\prod_{j\ge0,n\ge1}\left(a_{\left(2n-1\right).2^{j}}a_{\left(2n\right).2^{j}}^{2}\right)^{j}=\prod_{m\ge1}a_{m}^{\nu_{2}^{2}\left(m\right)}
\end{equation}
or equivalently
\begin{equation}
\sum_{j\ge0,n\ge1}j\left(b_{\left(2n-1\right).2^{j}}+2b_{\left(2n\right).2^{j}}\right)=\sum_{m\ge1}\nu_{2}^{2}\left(m\right)b_{m}.\checked
\end{equation}
\item In the case $\varphi\left(j\right)=\chi\left(j\right)=1$,
\begin{equation}
\prod_{j\ge0,n\ge1}a_{\left(2n-1\right).2^{j}}a_{\left(2n\right).2^{j}}=\prod_{m\ge1}a_{m}^{\nu_{2}\left(m\right)+1}
\end{equation}
or equivalently
\begin{equation}
\sum_{j\ge0,n\ge1}b_{\left(2n-1\right).2^{j}}+b_{\left(2n\right).2^{j}}=\sum_{m\ge1}\left(\nu_{2}\left(m\right)+1\right)b_{m}.\checked
\end{equation}
\item The case $\varphi\left(j\right)=q^{j},\chi\left(j\right)=\left(1-q\right)q^{j}$
corresponds to the parameterized identity
\begin{equation}
\prod_{j\ge0,n\ge1}\left(\frac{a_{\left(2n-1\right).2^{j}}}{a_{\left(2n\right).2^{j}}^{q-1}}\right)^{q^{j}}=\prod_{m\ge1}a_{m}
\end{equation}
which is (\ref{eq:q-case}), and the case $\varphi\left(j\right)=2^{j},\chi\left(j\right)=-2^{j}$
corresponds to the original identity
\begin{equation}
\prod_{j\ge0,n\ge1}\left(\frac{a_{\left(2n-1\right).2^{j}}}{a_{\left(2n\right).2^{j}}}\right)^{2^{j}}=\prod_{m\ge1}a_{m}\checked
\label{TwoJ}   
\end{equation}
which is (\ref{eq:general}).
\item A last case is the choice $\chi\left(j\right)=j^{2p}$ for an integer
$p\ge1$ and $\varphi\left(j\right)=-\frac{B_{2p+1}\left(j\right)}{2p+1}$
with $B_{2p+1}\left(x\right)$ the Bernoulli polynomial of degree
$2p+1$, that produces the identity
\begin{equation}
\prod_{j\ge0,n\ge1}a_{\left(2n-1\right).2^{j}}^{\frac{B_{2p+1}\left(j\right)}{2p+1}}=\prod_{j\ge0,n\ge1}a_{\left(2n\right).2^{j}}^{j^{2p}}
\end{equation}
or
\begin{equation}
\sum_{j\ge0,n\ge1}\frac{B_{2p+1}\left(j\right)}{2p+1}b_{\left(2n-1\right).2^{j}}=\sum_{j\ge0,n\ge1}j^{2p}b_{\left(2n\right).2^{j}}\checked
\label{Bcase}
\end{equation}
\end{enumerate}
\begin{example}
Some more examples are: 
\end{example}

\begin{enumerate}
\item the case $\varphi\left(j\right)=-j^{2}+j+1,$ $\chi\left(j\right)=2j$
produces
\begin{equation}
\prod_{j\ge0,n\ge1}a_{\left(2n-1\right).2^{j}}^{-j^{2}+j+1}a_{\left(2n\right).2^{j}}^{2j}=\prod_{m\ge1}a_{m}
\label{Ex312}
\end{equation}
or
\begin{equation}
 \sum_{n\ge1,j\ge0}\left(-j^{2}+j+1\right)b_{\left(2n-1\right).2^{j}}+\left(2j\right)b_{\left(2n\right).2^{j}}=\sum_{m\ge1}b_{m}.\checked
\label{Ex312a}   
\end{equation}

\item the telescoping choice $\varphi\left(k\right)=\frac{1}{k+1},$ $\chi\left(k\right)=\frac{1}{\left(k+1\right)\left(k+2\right)}$
produces
\begin{equation}
\prod_{j\ge0,n\ge1}a_{\left(2n-1\right).2^{j}}^{\frac{1}{j+1}}a_{\left(2n\right).2^{j}}^{\frac{1}{\left(j+1\right)\left(j+2\right)}}=\prod_{m\ge1}a_{m}\label{eq:1/(k+1)}
\end{equation}
or
\begin{equation}
\sum_{j\ge0,n\ge1}\frac{1}{j+1}b_{\left(2n-1\right).2^{j}}+\frac{1}{\left(j+1\right)\left(j+2\right)}b_{\left(2n\right).2^{j}}=\sum_{m\ge1}b_{m}\checked
\label{eq:1/(k+1)B}
\end{equation}
\item the base 3 case of the previous identity is
\begin{equation}
\prod_{j\ge0,n\ge1}a_{\left(3n-1\right).3^{j}}^{\frac{1}{j+1}}a_{\left(3n-2\right).3^{j}}^{\frac{1}{j+1}}a_{\left(3n\right).3^{j}}^{\frac{1}{\left(j+1\right)\left(j+2\right)}}=\prod_{m\ge1}a_{m}
\end{equation}
or
\begin{equation}
\sum_{j\ge0,n\ge1}\frac{1}{j+1}b_{\left(3n-1\right).3^{j}}+\frac{1}{j+1}b_{\left(3n-2\right).3^{j}}+\frac{1}{\left(j+1\right)\left(j+2\right)}b_{\left(3n\right).3^{j}}=\sum_{m\ge1}b_{m}\checked
\end{equation}
\item the choice $\varphi\left(k\right)=1-\frac{k\left(k+3\right)}{4\left(k+1\right)\left(k+2\right)},$
$\chi\left(k\right)=\frac{1}{\left(k+1\right)\left(k+2\right)\left(k+3\right)}$
produces
\begin{equation}
\prod_{j\ge0,n\ge1}a_{\left(2n-1\right).2^{j}}^{1-\frac{j\left(j+3\right)}{4\left(j+1\right)\left(j+2\right)}}a_{\left(2n\right).2^{j}}^{\frac{1}{\left(j+1\right)\left(j+2\right)\left(j+3\right)}}=\prod_{m\ge1}a_{m}
\end{equation}
or 
\begin{equation}
\sum_{j\ge0,n\ge1}\left(1-\frac{j\left(j+3\right)}{4\left(j+1\right)\left(j+2\right)}\right)b_{\left(2n-1\right).2^{j}}+\frac{1}{\left(j+1\right)\left(j+2\right)\left(j+3\right)}b_{\left(2n\right).2^{j}}=\sum_{m\ge1}b_{m}\checked
\end{equation}
\end{enumerate}

\begin{rem}
We close this section with two specializations
\begin{itemize}
\item 
the specialization $\chi\left(k\right)=0$ produces 
generating functions for the valuation function $\nu_b(n),$ of the type
\begin{equation}
\prod_{j\ge0,n\ge1}\left(\prod_{k=1}^{b-1}a_{\left(nb-k\right).b^{j}}^{\varphi\left(j\right)}\right)=\prod_{m\ge1}a_{m}^{\varphi\left(\nu_{b}\left(m\right)\right)}
\end{equation}
or
\begin{equation}
\sum_{j\ge0,n\ge1}\varphi\left(j\right) \sum_{k=1}^{b-1}b_{\left(nb-k\right).b^{j}}=\sum_{m\ge1}\varphi\left(\nu_{b}\left(m\right)\right) b_{m};
\end{equation}
for example, in the case $b=2,$
\begin{equation}
\prod_{j\ge0,n\ge1}a_{\left(2n-1\right).2^{j}}^{\varphi\left(j\right)}=\prod_{m\ge1}a_{m}^{\varphi\left(\nu_{2}\left(m\right)\right)}
\end{equation}
or equivalently
\begin{equation}
\sum_{j\ge0,n\ge1}\varphi\left(j\right)b_{\left(2n-1\right).2^{j}}=\sum_{m\ge1}\varphi\left(\nu_{2}\left(m\right)\right)b_{m}.
\end{equation}
\begin{itemize}
\item 
the choice $b_{m}=t^{m}$ produces a generating function for
the sequence $\left\{ \varphi\left(\nu_{2}\left(m\right)\right)\right\} $
in the form
\begin{equation}
\sum_{j\ge0}\varphi\left(j\right)\frac{t^{2^{j}}}{1-t^{2^{j+1}}}=\sum_{m\ge1}\varphi\left(\nu_{2}\left(m\right)\right)t^{m}
\end{equation}
\item the choice $b_{m}=\frac{1}{m^{s}}$  produces the identity between Dirichlet
series
\begin{equation}
\left(1-2^{-s}\right)\zeta\left(s\right)\sum_{j\ge0}\frac{\varphi\left(j\right)}{2^{s j}}=\sum_{m\ge1}\frac{\varphi\left(\nu_{2}\left(m\right)\right)}{m^{s}};
\end{equation}
choosing $\varphi\left(j\right) = \cos\left(2 \pi j x\right)$ 
produces
\begin{align}
\label{cos}
\sum_{m\ge1} \frac{\cos \left(2\pi x \nu_2\left(m\right)\right)}{m^s}
&=
\frac{2^s-1}{2^{s+1}}
\frac{\cos\left(2\pi x\right)-2^s}{\cos\left(2\pi x\right)-\frac{2^s+2^{-s}}{2}}
\zeta\left(s \right).
\end{align}
The specializations $x=\frac{1}{4}$ and $x=\frac{1}{2}$ successively produce, for $s >1,$ the  identities
\begin{equation}
\sum_{m\ge1} \frac{\cos \left(\frac{\pi}{2} \nu_2\left(m\right)\right)}{m^{s}}=
\frac{4^s-2^s}{4^s+1}\zeta\left(s\right)
\end{equation}
and
\begin{equation}
\label{x=1/2}
\sum_{m\ge1} \frac{\left(-1\right)^{\nu_2\left(m\right)}}{m^{s}}=
\frac{2^s-1}{2^s+1}\zeta\left(s\right).
\end{equation}
This identity should be compared to the Dirichlet series (see OEIS entry A007814)
$$
\sum_{m\ge1}
\frac{\nu_2(m) }{m^s}=\frac{\zeta(s)}{2^s-1}
$$
\end{itemize}
\item
the specialization $\varphi\left(k\right)=0$ produces
identities of the type
\begin{equation}
\prod_{j\ge0,n\ge1}a_{\left(nb\right).b^{j}}^{\chi\left(j\right)}=\prod_{m\ge1}a_{m}^{\sum_{k=0}^{\nu_{b}\left(m\right)-1}\chi\left(k\right)}
\end{equation}
or
\begin{equation}
\sum_{j\ge0,n\ge1}\chi\left(j\right) b_{\left(bn\right).b^{j}}
=\sum_{m\ge1}\left(
\sum_{k=0}^{\nu_{b}\left(m\right)-1}\chi\left(k\right)
\right)b_{m};
\end{equation}
for example, if $b=2$,
\begin{equation}
\prod_{j\ge0,n\ge1}a_{\left(2n\right).2^{j}}^{\chi\left(j\right)}=\prod_{m\ge1}a_{m}^{\sum_{k=0}^{\nu_{2}\left(m\right)-1}\chi\left(k\right)}
\end{equation}
or equivalently
\begin{equation}
\sum_{j\ge0,n\ge1}\chi\left(j\right) b_{\left(2n\right).2^{j}}
=\sum_{m\ge1}\left(
\sum_{k=0}^{\nu_{2}\left(m\right)-1}\chi\left(k\right)
\right)b_{m}.
\end{equation}
The specialization $b_m=\frac{1}{m^{s}}$ and $\chi\left(j\right)=t^{j}$ yields
the Dirichlet series
\begin{equation}
\sum_{m \ge 1} 
\frac{t^{\nu_2\left(m\right)}}{m^s}
=\left(\frac{2^s -1}{2^s - t}\right)
\zeta\left(s \right),
\end{equation}
the special case $t=-1$ of which reduces to \eqref{x=1/2} while the case $t=e^{\imath 2 \pi x}$ reduces to \eqref{cos}.
\end{itemize}
\end{rem}
\begin{rem}
Identity \eqref{x=1/2} is in fact easy to recover directly from the observation that $\nu_2(m)=0$ for $m$ odd while $\nu_2(m)$ is even or odd according to $j=0$ or not in the factorization $m=(2m-1).2^j.$ Employing this fact allows the odd terms of the sum in \eqref{cos} to be evaluated, leading to
\begin{equation}
    \moverset{\infty}{\munderset{m =1}{\sum}}\! \frac{\cos \! \left(2\,\pi \,x\,\nu_{2} \! \left(2\,m \right)\right)}{\left(2\,m \right)^{s}}
 = 
-\frac{\left(\cos \! \left(2\,\pi \,x \right) \left(2^{s}-1\right)-1+2^{-s}\right) \zeta \! \left(s \right)}{2\,\cos \! \left(2\,\pi \,x \right) 2^{s}-4^{s}-1};
\label{Eq3p5b}
\end{equation}
Finally, take the limit $s\rightarrow 1$, with $x=1/6$ when the series converges, to find
\begin{equation}
\moverset{\infty}{\munderset{m =1}{\sum}}\! \frac{\cos \! \left({\pi \,\nu_{2} \left(2\,m \right)}/{3}\right)}{m }
 = \frac{\ln \! \left(2\right)}{3}
\,.
\label{Eq3p5b1}
\end{equation}
\end{rem}
\section{a finite version}
A sum or product over a finite range for the index $j$ is stated next.
\begin{prop}
For any integer $J\ge1$, the following identity holds
\begin{equation}
\prod_{j\ge J,n\ge1}a_{\left(2n-1\right).2^{j}}^{\varphi\left(j\right)}=\prod_{p\ge1}a_{p.2^{J}}^{\varphi\left(J+s_{2}\left(p\right)\right)}.
\end{equation}
\end{prop}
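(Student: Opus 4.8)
The plan is to prove this exactly as the Proposition of Section~\ref{sec:A-general-case}: show that, after collecting factors, every $a_{m}$ carries the same exponent on both sides, the identity then being a formal (``structural'') consequence of that equality together with the standing assumption that the two infinite products converge. Throughout, $s_{2}(p)$ denotes the $2$-adic valuation $\nu_{2}(p)$, so the single arithmetic fact the argument rests on is $\nu_{2}\left(p.2^{J}\right)=J+\nu_{2}(p)=J+s_{2}(p)$.

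First I would reindex the left-hand side by the value $m=(2n-1).2^{j}$ of the index. By the uniqueness of the factorization \eqref{representation}, for a given positive integer $m$ the equation $m=(2n-1).2^{j}$ has a solution with $n\ge1$, $j\ge0$ only for $j=\nu_{2}(m)$ and $2n-1=m/2^{\nu_{2}(m)}$, and that solution is unique; imposing $j\ge J$ selects exactly those $m$ with $\nu_{2}(m)\ge J$. Hence
\begin{equation}
\prod_{j\ge J,\,n\ge1}a_{\left(2n-1\right).2^{j}}^{\varphi\left(j\right)}=\prod_{\nu_{2}\left(m\right)\ge J}a_{m}^{\varphi\left(\nu_{2}\left(m\right)\right)}.
\end{equation}
Then I would reindex this product over $\{m\ge1:\nu_{2}(m)\ge J\}$, which is precisely the set $\{p.2^{J}:p\ge1\}$, via the bijection $m\leftrightarrow p=m/2^{J}$. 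Substituting $m=p.2^{J}$ and using $\nu_{2}(p.2^{J})=J+s_{2}(p)$ turns the right-hand side into $\prod_{p\ge1}a_{p.2^{J}}^{\varphi\left(J+s_{2}\left(p\right)\right)}$, which is the asserted identity; the corresponding sum form follows verbatim with products replaced by sums and exponents by coefficients.

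The computation is entirely bookkeeping, so the only point deserving care is convergence: passing from the double product $\prod_{j\ge J,n\ge1}$ to the single product $\prod_{\nu_{2}(m)\ge J}$ reorders the factors, and the subsequent bijective substitution $m=p.2^{J}$ is harmless only once that reordering is licensed. I would therefore state the result under the same hypothesis as in the earlier Propositions --- that the products converge unconditionally --- which makes both moves legitimate. As a sanity check one can alternatively recover the statement from the $\chi\equiv0$ case of the Proposition of Section~\ref{sec:A-general-case} applied to the weight $\psi$ equal to $\varphi$ for $j\ge J$ and vanishing for $0\le j\le J-1$, followed by the same substitution $m=p.2^{J}$; the direct reindexing above is shorter and keeps the convergence assumptions visible.
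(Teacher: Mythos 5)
Your proof is correct and in substance the same as the paper's: both arguments come down to the uniqueness of the factorization $m=(2n-1).2^{j}$ (so the left product runs exactly over the $m$ with $\nu_{2}(m)\ge J$, i.e.\ $m=p.2^{J}$) together with the relation $s_{2}(p.2^{J})=J+s_{2}(p)$, where $s_{2}=\nu_{2}$. The only cosmetic difference is that the paper obtains the restricted product by inserting the truncated weight $\varphi_{J}$ into the already-established identity $\prod_{j\ge0,n\ge1}a_{(2n-1).2^{j}}^{\varphi(j)}=\prod_{m\ge1}a_{m}^{\varphi(\nu_{2}(m))}$, whereas you reindex directly --- precisely the alternative route you mention at the end --- so the two proofs coincide up to presentation.
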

\begin{proof}
Choosing an integer $J\ge1$ and replacing the function $\varphi\left(j\right)$
with its truncated version
\begin{equation}
\varphi_{J}\left(j\right)=\begin{cases}
\varphi\left(j\right) & j\ge J\\
0 & \text{else}
\end{cases}
\end{equation}
in 
\begin{equation}
\prod_{j\ge1,n\ge1}a_{\left(2n-1\right).2^{j}}^{\varphi\left(j\right)}=\prod_{m\ge1}a_{m}^{\varphi\left(s_{2}\left(m\right)\right)}.
\end{equation}
produces the identity
\begin{equation}
\prod_{j\ge J,n\ge1}a_{\left(2n-1\right).2^{j}}^{\varphi\left(j\right)}=\prod_{p\ge1}a_{p.2^{J}}^{\varphi\left(s_{2}\left(p.2^{J}\right)\right)}.
\end{equation}
Indeed, using
\begin{equation}
\prod_{j\ge0,n\ge1}a_{\left(2n-1\right).2^{j}}^{\varphi_{J}\left(j\right)}=\prod_{m\ge1}a_{m}^{\varphi_{J}\left(s_{2}\left(m\right)\right)}
\end{equation}
we look for the values of the index $m$ such that $s_{2}\left(m\right)\ge J,$
which are exactly 
\begin{equation}
m=2^{J}p,\,\,p\in\mathbb{N}
\end{equation}
so that
\begin{equation}
\prod_{j\ge J,n\ge1}a_{\left(2n-1\right).2^{j}}^{\varphi\left(j\right)}=\prod_{p\ge1}a_{p.2^{J}}^{\varphi\left(s_{2}\left(p.2^{J}\right)\right)}.
\end{equation}
Since moreover $s_{2}\left(p.2^{J}\right)=J+s_{2}\left(p\right),$
we deduce the result.
\end{proof}
\begin{cor}
The specialization $\varphi\left(j\right)=1$ produces
\begin{equation}
\prod_{j\ge J,n\ge1}a_{\left(2n-1\right).2^{j}}=\prod_{p\ge1}a_{p.2^{J}}
\end{equation}
or equivalently
\begin{equation}
\prod_{0\le j<J,n\ge1}a_{\left(2n-1\right).2^{j}}
=\prod_{p\ge1,p\not\equiv0\mod2^{J}}a_{p}
\end{equation}

or its sum version
\begin{equation}
\sum_{j\ge J,n\ge1}b_{\left(2n-1\right).2^{j}}=\sum_{p\ge1,p\not\equiv0\mod2^{J}}b_{p}
\end{equation}
\end{cor}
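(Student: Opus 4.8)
The plan is to read the first identity straight off the preceding Proposition, then obtain the two ``equivalent'' forms by complementing within $\mathbb{N}$, and finally transcribe the whole thing additively. The only thing to keep in mind throughout is that each of these identities is \emph{structural}: it says that two expressions are assembled from the very same multiset of factors (or summands), so all that is ever needed is a bijection of index sets, together with the hypothesis that $\prod_{m\ge1}a_{m}$ (resp.\ $\sum_{m\ge1}b_{m}$) converges, which legitimizes the regrouping.

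First I would specialize the preceding Proposition to $\varphi\equiv1$. Its right-hand side is $\prod_{p\ge1}a_{p.2^{J}}^{\varphi(J+s_{2}(p))}$, and since $\varphi(J+s_{2}(p))=1$ for every $p\ge1$ this collapses to $\prod_{p\ge1}a_{p.2^{J}}$, while the left-hand side is $\prod_{j\ge J,n\ge1}a_{(2n-1).2^{j}}$. That is the first displayed equation, with nothing further to do. (If one prefers a self-contained argument: the map $(n,j)\mapsto p:=(2n-1).2^{j-J}$ is a bijection from $\{(n,j):n\ge1,\ j\ge J\}$ onto $\mathbb{N}$ carrying the index $(2n-1).2^{j}$ to $p.2^{J}$, so the two products run over the same multiset of factors.)

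For the second (equivalent) form I would invoke the base-$2$ instance of the structural fact $C_{2}=\mathbb{N}$ from the introduction (proved in Section~\ref{sec:A-general-case}): every positive integer $m$ has the unique factorization $m=(2n-1).2^{j}$ with $n\ge1$ and $j=\nu_{2}(m)$. Hence $\prod_{m\ge1}a_{m}=\prod_{j\ge0,n\ge1}a_{(2n-1).2^{j}}$, and splitting the range of $j$ at $J$ gives
\[
\prod_{m\ge1}a_{m}=\Bigl(\prod_{0\le j<J,\,n\ge1}a_{(2n-1).2^{j}}\Bigr)\Bigl(\prod_{j\ge J,\,n\ge1}a_{(2n-1).2^{j}}\Bigr).
\]
By the first identity the second factor equals $\prod_{p\ge1}a_{p.2^{J}}=\prod_{p\ge1,\,p\equiv0\bmod2^{J}}a_{p}$, whereas partitioning $\mathbb{N}$ by divisibility by $2^{J}$ gives $\prod_{m\ge1}a_{m}=\bigl(\prod_{p\not\equiv0\bmod2^{J}}a_{p}\bigr)\bigl(\prod_{p\equiv0\bmod2^{J}}a_{p}\bigr)$; cancelling the common factor yields $\prod_{0\le j<J,\,n\ge1}a_{(2n-1).2^{j}}=\prod_{p\ge1,\,p\not\equiv0\bmod2^{J}}a_{p}$. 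Even more directly, as $j$ runs through $0,1,\dots,J-1$ and $n$ through $\mathbb{N}$ the indices $(2n-1).2^{j}$ enumerate, each exactly once, the integers $m$ with $\nu_{2}(m)\le J-1$, i.e.\ precisely those $m\not\equiv0\bmod2^{J}$.

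Finally, the sum versions follow by repeating this index bookkeeping verbatim with $\prod$ replaced by $\sum$ (equivalently, by taking logarithms of the convergent products); the same bijections handle both the $j\ge J$ range and its complement $0\le j<J$. I do not anticipate any genuine obstacle: the content is entirely the combinatorics of the factorization $m=(2n-1).2^{\nu_2(m)}$, and the sole point requiring a word of care — the legality of the rearrangement — is exactly the ``structural'' mechanism emphasized in the introduction and already used in Section~\ref{sec:A-general-case}.
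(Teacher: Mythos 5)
Your proposal is correct and takes essentially the paper's own route: the first identity is just the $\varphi\equiv1$ specialization of the preceding Proposition, and the equivalent product and sum forms follow, as you argue, by splitting the dissection $\prod_{m\ge1}a_{m}=\prod_{j\ge0,n\ge1}a_{\left(2n-1\right).2^{j}}$ at $j=J$ and complementing modulo $2^{J}$. The only remark worth adding is that the corollary's third display as printed pairs the $j\ge J$ range with the indices $p\not\equiv0\bmod 2^{J}$, evidently a typo (either the left side should read $0\le j<J$ or the right side $p\equiv0\bmod 2^{J}$), and the sum identities your bookkeeping yields are the corrected ones.
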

\section{The case of a double-indexed sequence}
\begin{prop}
Consider a double-indexed sequence $\left\{ a_{p,q}\right\} $. We
have
\begin{equation}
\label{double product version}    
\prod_{k\ge0,m\ge1}\prod_{j\ge0,n\ge1}\left(\frac{a_{\left(2n-1\right).2^{j},\left(2m-1\right).2^{k}}a_{\left(2n\right).2^{j},\left(2m\right).2^{k}}}{a_{\left(2n\right).2^{j},\left(2m-1\right).2^{k}}a_{\left(2n-1\right).2^{j},\left(2m\right).2^{k}}}\right)^{2^{j+k}}=\prod_{p\ge1,q\ge1}a_{p,q}.
\end{equation}
The sum version is, for an arbitrary sequence $\left\{ b_{p,q}\right\} ,$
\begin{align}
\label{double sum version}
\sum_{p,q\ge1}b_{p,q} & =\sum_{j,k\ge0}\sum_{m,n\ge1}2^{j+k}\left[b_{\left(2n-1\right).2^{j},\left(2m-1\right).2^{k}}+b_{\left(2n\right).2^{j},\left(2m\right).2^{k}}\right.\\
\nonumber
 & -\left.b_{\left(2n\right).2^{j},\left(2m-1\right).2^{k}}-b_{\left(2n-1\right).2^{j},\left(2m\right).2^{k}}\right]
\end{align}
$ $
\end{prop}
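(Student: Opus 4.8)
The plan is to obtain \eqref{double product version} by iterating the one-dimensional identity \eqref{eq:general} (equivalently \eqref{TwoJ}) once in each of the two indices. First I would freeze the second index: for every fixed $q\ge1$, applying \eqref{eq:general} to the sequence $p\mapsto a_{p,q}$ gives $\prod_{p\ge1}a_{p,q}=\prod_{j\ge0,n\ge1}(a_{\left(2n-1\right).2^{j},q}/a_{\left(2n\right).2^{j},q})^{2^{j}}$. Multiplying over all $q\ge1$ and interchanging the order of the double product yields
\begin{equation}
\prod_{p,q\ge1}a_{p,q}=\prod_{j\ge0,n\ge1}\left(\prod_{q\ge1}\frac{a_{\left(2n-1\right).2^{j},q}}{a_{\left(2n\right).2^{j},q}}\right)^{2^{j}}.
\end{equation}

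Next I would freeze the pair $(j,n)$ and apply \eqref{eq:general} a second time, now to the sequence $q\mapsto e_{q}:=a_{\left(2n-1\right).2^{j},q}/a_{\left(2n\right).2^{j},q}$. The crucial book-keeping step is that a ratio of ratios collapses,
\begin{equation}
\frac{e_{\left(2m-1\right).2^{k}}}{e_{\left(2m\right).2^{k}}}=\frac{a_{\left(2n-1\right).2^{j},\left(2m-1\right).2^{k}}\,a_{\left(2n\right).2^{j},\left(2m\right).2^{k}}}{a_{\left(2n\right).2^{j},\left(2m-1\right).2^{k}}\,a_{\left(2n-1\right).2^{j},\left(2m\right).2^{k}}},
\end{equation}
so the inner use of \eqref{eq:general} introduces an exponent $2^{k}$; since the outer exponent is $2^{j}$ and $2^{j}\cdot 2^{k}=2^{j+k}$, merging the two nested products reproduces \eqref{double product version} verbatim. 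For the additive statement \eqref{double sum version} I would run the identical two-step argument on the additive version of \eqref{eq:general}, which has the virtue of not requiring $b_{p,q}=\log a_{p,q}$.

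The step I expect to be the main obstacle is justifying the interchange of the order of the infinite products --- a Fubini-type rearrangement whose validity is exactly what one must read into the hypothesis ``$\prod_{p,q\ge1}a_{p,q}$ is convergent.'' The cleanest way to avoid this issue altogether is to argue structurally, as in the proof given for \eqref{main_multiplicative}: check directly that each factor $a_{p,q}$ occurs with the same cumulative exponent on both sides of \eqref{double product version}. Put $\alpha:=2^{\nu_{2}(p)}$ and $\beta:=2^{\nu_{2}(q)}$; then a given index value $p$ arises from a first component of the form $(2n-1).2^{j}$ only when $j=\nu_{2}(p)$ (once), and from a first component of the form $(2n).2^{j}$ for $j=0,1,\dots,\nu_{2}(p)-1$, and similarly for $q$ in the second component. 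Summing the signed weights $\pm 2^{j+k}$ over the four positions appearing in \eqref{double product version} gives
\begin{equation}
\alpha\beta+(\alpha-1)(\beta-1)-(\alpha-1)\beta-\alpha(\beta-1)=1,
\end{equation}
which is exactly the exponent of $a_{p,q}$ on the right-hand side $\prod_{p,q\ge1}a_{p,q}$; the same count with $2^{j+k}$ replaced by an additive weight produces \eqref{double sum version}.
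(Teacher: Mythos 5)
Your main argument---applying the one-dimensional identity \eqref{eq:general} once in each index and collapsing the ratio of ratios so that the exponents $2^{j}$ and $2^{k}$ merge into $2^{j+k}$---is essentially the paper's own proof, which fixes $q$, sets $b_{q}=\prod_{j\ge0,n\ge1}\left(a_{\left(2n-1\right).2^{j},q}/a_{\left(2n\right).2^{j},q}\right)^{2^{j}}$, and then dissects the sequence $\left\{ b_{q}\right\}$ in the second index. Your supplementary exponent count $\alpha\beta+(\alpha-1)(\beta-1)-(\alpha-1)\beta-\alpha(\beta-1)=1$ is also correct; it simply transposes the paper's structural proof of \eqref{main_multiplicative} to two indices and, as you note, sidesteps the product-rearrangement issue.
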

\begin{proof}
For a fixed value of $q$ we have
\begin{equation}
\prod_{j\ge0,n\ge1}\left(\frac{a_{\left(2n-1\right).2^{j},q}}{a_{\left(2n\right).2^{j},q}}\right)^{2^{j}}=\prod_{p\ge1}a_{p,q}.
\end{equation}
Denote 
\begin{equation}
b_{q}=\prod_{j\ge0,n\ge1}\left(\frac{a_{\left(2n-1\right).2^{j},q}}{a_{\left(2n\right).2^{j},q}}\right)^{2^{j}}
\end{equation}
so that
\begin{equation}
\prod_{q\ge1}b_{q}=\prod_{p\ge1,q\ge1}a_{p,q}.
\end{equation}
Using
\begin{equation}
\prod_{q\ge1}b_{q}=\prod_{k\ge0,m\ge1}\left(\frac{b_{\left(2m-1\right).2^{k}}}{b_{\left(2m\right).2^{k}}}\right)^{2^{k}}
\end{equation}
we deduce 
\begin{align*}
\prod_{p\ge1}a_{p,q} & =\prod_{k\ge0,m\ge1}\left(\frac{\prod_{j\ge0,n\ge1}\left(\frac{a_{\left(2n-1\right).2^{j},\left(2m-1\right).2^{k}}}{a_{\left(2n\right).2^{j},\left(2m-1\right).2^{k}}}\right)^{2^{j}}}{\prod_{j\ge0,n\ge1}\left(\frac{a_{\left(2n-1\right).2^{j},\left(2m\right).2^{k}}}{a_{\left(2n\right).2^{j},\left(2m\right).2^{k}}}\right)^{2^{j}}}\right)^{2^{k}}\\
 & =\prod_{k\ge0,m\ge1}\prod_{j\ge0,n\ge1}\left(\frac{a_{\left(2n-1\right).2^{j},\left(2m-1\right).2^{k}}a_{\left(2n\right).2^{j},\left(2m\right).2^{k}}}{a_{\left(2n\right).2^{j},\left(2m-1\right).2^{k}}a_{\left(2n-1\right).2^{j},\left(2m\right).2^{k}}}\right)^{2^{j+k}}
\end{align*}
\end{proof}
\begin{cor}
The two Lambert series
\begin{equation}
f\left(q\right)=\sum_{n\ge1}n\frac{q^{n}}{1-q^{n}}\text{ and }\,g\left(q\right)=\sum_{n\ge1}n\frac{q^{n}}{1+q^{n}}
\end{equation}
are related as
\begin{equation}
f\left(q\right)=\sum_{j,k\ge0}2^{2j+k}\left[g\left(q^{2^{j+k}}\right)-4g\left(q^{2^{j+k+1}}\right)\right].
\end{equation}    
\end{cor}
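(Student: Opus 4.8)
The plan is to deduce the relation from the additive double identity~\eqref{double sum version} by a judicious choice of the double sequence, followed by a parity bookkeeping that reassembles its right-hand side into dilated copies of $g$.

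First, recall the Lambert expansions $\frac{q^{n}}{1-q^{n}}=\sum_{d\ge1}q^{nd}$ and $\frac{q^{n}}{1+q^{n}}=\sum_{d\ge1}(-1)^{d-1}q^{nd}$, absolutely convergent for $|q|<1$, so that $f(q)=\sum_{m,n\ge1}m\,q^{mn}$. I would apply \eqref{double sum version} to the double sequence $b_{p,q}:=p\,q^{pq}$, which is summable for $|q|<1$; in the symbol $q^{pq}$ the letter $q$ denotes the variable of the Lambert series, not the left-hand summation subscript of \eqref{double sum version}, a harmless notational overlap. With this choice the left-hand side of \eqref{double sum version} is precisely $\sum_{p,q\ge1}p\,q^{pq}=f(q)$.

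Next I would evaluate the right-hand side of \eqref{double sum version}. Substituting $b_{(2n-1)2^{j},(2m-1)2^{k}}=(2n-1)2^{j}\,q^{(2n-1)(2m-1)2^{j+k}}$ together with the three analogous terms, one pulls the common factor $2^{j}$ out of the leading coefficient --- so that the prefactor $2^{j+k}$ becomes $2^{2j+k}$ --- and, for fixed $j,k$, sets $N=2^{j+k}$ and $y=q^{N}$. As $m,n$ run over $\mathbb{N}$, the numbers $2n-1,2m-1$ run over the odd positive integers and $2n,2m$ over the even ones, so the four-term bracket collapses to $\sum_{a,b\ge1}(-1)^{a-1}(-1)^{b-1}a\,y^{ab}$, the sign being $+$ when $a$ and $b$ have the same parity and $-$ otherwise. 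Performing the sum over $b$ via $\sum_{b\ge1}(-1)^{b-1}z^{b}=\frac{z}{1+z}$ turns the inner sum into $G(y):=\sum_{a\ge1}(-1)^{a-1}a\,\frac{y^{a}}{1+y^{a}}$, whence $f(q)=\sum_{j,k\ge0}2^{2j+k}\,G(q^{2^{j+k}})$.

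Finally I would identify $G(y)$ with $g(y)-4g(y^{2})$: splitting the sum defining $G$ according to the parity of $a$, its part over even $a$ equals $\sum_{b\ge1}2b\,\frac{y^{2b}}{1+y^{2b}}=2g(y^{2})$, while its part over odd $a$ equals $g(y)$ minus that even part, i.e. $g(y)-2g(y^{2})$; since in $G$ the odd part carries sign $+$ and the even part sign $-$, this gives $G(y)=(g(y)-2g(y^{2}))-2g(y^{2})=g(y)-4g(y^{2})$, and substituting $y=q^{2^{j+k}}$ yields the stated identity. The only delicate point is the bookkeeping of the middle step --- keeping track of the extra factor $2^{j}$ and checking that the four parity-restricted double sums recombine into the clean alternating double sum defining $G$; the convergence for $|q|<1$ and the two geometric-series evaluations are routine.
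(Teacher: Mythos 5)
Your proposal is correct and follows essentially the same route as the paper: both apply the double dissection formula \eqref{double sum version} to $b_{p,q}=p\,q^{pq}$, extract the factor $2^{j}$ to form $2^{2j+k}$, reduce the four parity-restricted sums to the alternating series $\sum_{a}(-1)^{a-1}a\,y^{a}/(1+y^{a})$, and identify that with $g(y)-4g(y^{2})$ via the odd/even split. The only difference is cosmetic (you sum over the second index first as a signed geometric series, while the paper evaluates the four double sums separately before recombining), so there is nothing further to add.
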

\begin{proof}
Writing the function $f$ as the double sum
\begin{equation}
f\left(q\right)=\sum_{m,n}nq^{nm},
\end{equation}
the dissection formula \eqref{double sum version} yields
\begin{align*}
f\left(q\right) & =\sum_{j,k\ge0}\sum_{n,m\ge1}2^{j+k}\left[\left(2n-1\right)2^{j}q^{\left(2n-1\right).2^{j}.\left(2m-1\right).2^{k}}-\left(2n-1\right)2^{j}q^{\left(2n-1\right).2^{j}.\left(2m\right).2^{k}}\right.\\
 & +\left.\left(2n\right).2^{j}q^{\left(2n\right).2^{j}.\left(2m\right).2^{k}}-\left(2n\right)2^{j}q^{\left(2n\right).2^{j}.\left(2m-1\right).2^{k}}\right].
\end{align*}
Each of the 4 sums over $n,m\ge1$ is computed as follows
\begin{equation}
\sum_{n,m\ge1}\left(2n-1\right)q^{\left(2n-1\right).2^{j}.\left(2m-1\right).2^{k}}=\sum_{n\ge1}\left(2n-1\right)\frac{q^{\left(2n-1\right)2^{j+k}}}{1-q^{\left(2n-1\right)2^{j+k+1}}},
\end{equation}
\begin{equation}
\sum_{n,m\ge1}\left(2n-1\right)q^{\left(2n-1\right).2^{j}.\left(2m\right).2^{k}}=\sum_{n\ge1}\left(2n-1\right)\frac{q^{\left(2n-1\right)2^{j+k+1}}}{1-q^{\left(2n-1\right)2^{j+k+1}}},
\end{equation}
\begin{equation}
\sum_{n,m\ge1}\left(2n\right)q^{\left(2n\right).2^{j}.\left(2m\right).2^{k}}=\sum_{n\ge1}2n\frac{q^{n.2^{j+k+2}}}{1-q^{n.2^{j+k+2}}},
\end{equation}
\begin{equation}
\sum_{n,m\ge1}\left(2n\right)q^{\left(2n\right).2^{j}.\left(2m-1\right).2^{k}}=\sum_{n\ge1}\left(2n\right)\frac{q^{n.2^{j+k+1}}}{1-q^{n.2^{j+k+2}}}.
\end{equation}
With the notation $q_{j,k}=q^{2^{j+k}},$ this yields
\begin{align*}
f\left(q\right) & =\sum_{j,k\ge0}2^{2j+k}\sum_{n\ge1}\left[\left(2n-1\right)\frac{q_{j,k}^{2n-1}-q_{j,k}^{2\left(2n-1\right)}}{1-q_{j,k}^{2\left(2n-1\right)}}+\left(2n\right)\frac{q_{j,k}^{4n}-q_{j,k}^{2n}}{1-q_{j,k}^{4n}}\right]\\
 & =\sum_{j,k\ge0}2^{2j+k}\sum_{n\ge1}\left[\left(2n-1\right)\frac{q_{j,k}^{2n-1}}{1+q_{j,k}^{2n-1}}-\left(2n\right)\frac{q_{j,k}^{2n}}{1+q_{j,k}^{2n}}\right].
\end{align*}
This inner sum coincides with the difference between the odd part
$h_{o}$ and the even part $h_{e}$ of the function $h\left(q\right)=\sum_{n\ge1}n\frac{q_{j,k}^{n}}{1+q_{j,k}^{n}},$
also equal to $h\left(q\right)-2h_{e}\left(q\right),$ i.e. to
\begin{equation}
\sum_{n\ge1}\left[n\frac{q_{j,k}^{n}}{1+q_{j,k}^{n}}-4n\frac{q_{j,k}^{2n}}{1+q_{j,k}^{2n}}\right],
\end{equation}
so that finally
\begin{align*}
f\left(q\right) & =\sum_{j,k\ge0}2^{2j+k}\sum_{n\ge1}\left[n\frac{q^{n.2^{j+k}}}{1+q^{n.2^{j+k}}}-4n\frac{q^{n.2^{j+k+1}}}{1+q^{n.2^{j+k+1}}}\right]\\
 & =\sum_{j,k\ge0}2^{2j+k}\left(g\left(q^{2^{j+k}}\right)-4g\left(q^{2^{j+k+1}}\right)\right).
\end{align*}    
\end{proof}
We notice that the previous result can be extended in a straightforward way to the following result; we skip the details.
\begin{cor}
For $\mu\ge1,$ the two Lambert series
\begin{equation}
f\left(q\right)=\sum_{n\ge1}n^\mu\frac{q^{n}}{1-q^{n}}\text{ and }\,g\left(q\right)=\sum_{n\ge1}n^\mu\frac{q^{n}}{1+q^{n}}
\end{equation}
are related as
\begin{equation}
f\left(q\right)=\sum_{j,k\ge0}2^{2j+k}\left[g\left(q^{2^{j+k}}\right)-2^{\mu+1}g\left(q^{2^{j+k+1}}\right)\right].
\end{equation}    
\end{cor}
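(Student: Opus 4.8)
The plan is to mimic exactly the argument of the preceding corollary, keeping track only of the one place where the exponent $\mu$ enters. First I would write the first Lambert series as a double sum over $m,n\ge1$, namely $f(q)=\sum_{m,n\ge1}n^{\mu}q^{nm}$, so that the variable being raised to the power $\mu$ is the ``$n$-slot'' index, which is precisely the slot that gets dissected by the single-variable identity \eqref{eq:general} (equivalently \eqref{TwoJ}). Applying that dissection in the $n$-variable and leaving $m$ untouched would split $f(q)$ into four families of sums indexed by $j,n$ (the dissection of the $n$-slot) and by the free index $m$; after performing the geometric summation over $m\ge1$ in each of the four pieces, exactly as in the four displayed computations of the previous proof but now carrying the factor $\big((2n-1)2^{j}\big)^{\mu}$ or $\big((2n)2^{j}\big)^{\mu}$ in place of $(2n-1)2^{j}$ or $(2n)2^{j}$, one obtains an inner sum over $n\ge1$ with prefactor $2^{\mu j}$ pulled out (since $((2n-1)2^{j})^{\mu}=2^{\mu j}(2n-1)^{\mu}$), and then a further dissection of the $m$-slot, giving the overall prefactor $2^{(\mu+1)j+k}$ or similar.

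The key algebraic step, just as before, is the recombination of the two ``$q^{a}-q^{2a}$ over $1-q^{2a}$'' type fractions into ``$q^{a}/(1+q^{a})$'' fractions: that identity $\frac{x-x^{2}}{1-x^{2}}=\frac{x}{1+x}$ is independent of $\mu$, so it goes through verbatim. What changes is that the inner sum now reads $\sum_{n\ge1}\big[(2n-1)^{\mu}\frac{q_{j,k}^{2n-1}}{1+q_{j,k}^{2n-1}}-(2n)^{\mu}\frac{q_{j,k}^{2n}}{1+q_{j,k}^{2n}}\big]$, which is the difference of the odd and even parts of $h(q)=\sum_{n\ge1}n^{\mu}\frac{q_{j,k}^{n}}{1+q_{j,k}^{n}}$. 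Writing $h-h_{e}-h_{e}=h-2h_{e}$ and noting that $h_{e}(q)=\sum_{n\ge1}(2n)^{\mu}\frac{q_{j,k}^{2n}}{1+q_{j,k}^{2n}}=2^{\mu}\sum_{n\ge1}n^{\mu}\frac{(q_{j,k}^{2})^{n}}{1+(q_{j,k}^{2})^{n}}=2^{\mu}g(q^{2^{j+k+1}})$ — here is the only spot where $\mu$ produces the factor $2^{\mu}$ rather than the $2$ of the $\mu=1$ case — one gets the inner sum equal to $g(q^{2^{j+k}})-2^{\mu+1}g(q^{2^{j+k+1}})$, since $h-2h_{e}$ contributes $g(q^{2^{j+k}})-2\cdot 2^{\mu}g(q^{2^{j+k+1}})$.

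Finally I would reassemble: tracking the prefactors through the two dissections exactly as in the previous proof (the $n$-dissection contributes $2^{\mu j}$ from $((2n)2^{j})^{\mu}$ combined with the $2^{j}$ weight of \eqref{TwoJ} to give... — more precisely one checks the two geometric series over $m$ produce $2^{\mu j}\cdot 2^{j}=2^{(\mu+1)j}$ in the way that reduces to $2^{2j}$ when $\mu=1$, times $2^{k}$ from the outer dissection), one arrives at $f(q)=\sum_{j,k\ge0}2^{(\mu+1)j+k}\big(g(q^{2^{j+k}})-2^{\mu+1}g(q^{2^{j+k+1}})\big)$; for $\mu=1$ this is the stated $2^{2j+k}$ coefficient, matching the previous corollary. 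I expect the only real bookkeeping hazard to be keeping the exponents of $2$ straight across the two nested dissections and the even-part extraction; the combinatorial content is entirely supplied by \eqref{eq:general}, and the manipulation of Lambert fractions is identical to the $\mu=1$ case, so ``we skip the details'' is genuinely justified. (I note the paper states the answer as $2^{2j+k}$, i.e. it implicitly specializes the first $j$-exponent at $\mu=1$ in the coefficient while keeping the $\mu$-dependence inside the bracket — so in writing this up I would either reconcile the two exponents of $2$ or simply present the computation that yields the bracket factor $g(q^{2^{j+k}})-2^{\mu+1}g(q^{2^{j+k+1}})$, which is the content the corollary is really asserting.)
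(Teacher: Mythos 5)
Your argument is exactly the intended one: it repeats the proof of the $\mu=1$ corollary verbatim, with the single change that pulling $\bigl((2n\pm\cdot)2^{j}\bigr)^{\mu}=2^{\mu j}(2n\pm\cdot)^{\mu}$ out of the dissected summand turns the prefactor $2^{j+k}\cdot 2^{j}$ into $2^{j+k}\cdot 2^{\mu j}$, and the even-part extraction $h-2h_{e}$ produces $2^{\mu+1}$ in place of $4$. Your bookkeeping is right, and your parenthetical observation is not a quibble but a genuine correction: the coefficient in the displayed corollary should read $2^{(\mu+1)j+k}$, not $2^{2j+k}$; the latter is the $\mu=1$ specialization only. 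One can confirm this numerically, e.g. for $\mu=2$ the coefficient of $q^{2}$ on the right-hand side is $1$ with the weight $2^{2j+k}$ but equals $\sigma_{2}(2)=5$ with the weight $2^{3j+k}$, matching the left-hand side. So your write-up should state and prove the corrected identity
\begin{equation*}
f\left(q\right)=\sum_{j,k\ge0}2^{\left(\mu+1\right)j+k}\left[g\left(q^{2^{j+k}}\right)-2^{\mu+1}g\left(q^{2^{j+k+1}}\right)\right],
\end{equation*}
exactly as your computation yields it.
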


\section{A generating functional approach\label{sec:A-generating-functional}}

\subsection{The base 2 case}
\subsubsection{Two proofs of Teixeira's identity}
The series version of (\ref{eq:general}) applied to the case $a_{m}=t^{m}$ is
\begin{equation}
\sum_{m\ge1}t^{m}=\sum_{k\ge0,n\ge1}2^{k}t^{\left(2n-1\right)2^{k}}-2^{k}t^{\left(2n\right)2^{k}}
\end{equation}
Summing over $n$ in the right hand-side and using the simplification 
\begin{equation}
\sum_{k\ge0}2^{k}\frac{t^{2^{k}}\left(1-t^{2^{k}}\right)}{1-t^{2^{k+1}}}=
\sum_{k\ge0}2^{k}\frac{t^{2^{k}}}{1+t^{2^{k}}}
\end{equation}
produces
\begin{equation}
\frac{t}{1-t}=\sum_{k\ge0}2^{k}\frac{t^{2^{k}}}{1+t^{2^{k}}}.\label{eq:Teixeira}
\end{equation}
This identity is well-known and
appears as 
Problem 10 of Chapter II in \cite[p. 118]{Comtet}, where the result
is attributed to F.G. Teixeira \cite{Teixeira}.
\subsubsection*{first proof}
A simple proof of (\ref{eq:Teixeira}) is as follows: define 
\begin{equation}
\varphi\left(t\right)=\frac{t}{1-t},
\end{equation}
notice that this function satisfies
\begin{equation}
\varphi\left(t\right)+\varphi\left(-t\right)=2\varphi\left(t^{2}\right)
\end{equation}
and iterate this identity (see \cite[Appendix A]{Milgram} for convergence details).
\subsubsection*{second proof}
Another proof appeals to the theory of Lambert series: first, recall the definition of the  Dirichlet eta function 
\begin{equation}
\eta\left(s\right)=\sum_{n\ge1}\frac{\left(-1\right)^{n-1}}{n^{s}}=\left(1-2^{1-s}\right)\zeta\left(s\right).
\label{eq:eta}
\end{equation}
Then consider the following result provided in \cite[Problem 11, p.300]{Borwein}
\begin{prop}
Given two sequences $\left\{ a_{n}\right\} $ and $\left\{ b_{n}\right\} $, 
\begin{equation}
\label{Borwein1}
\eta\left(s\right)\sum_{n\ge1}\frac{a_{n}}{n^{s}}=\sum_{n\ge1}\frac{b_{n}}{n^{s}}    
\end{equation}
if and only if
\begin{equation}
\label{Borwein2}
\sum_{n\ge1}a_{n}\frac{x^{n}}{1+x^{n}}=\sum_{n\ge1}b_{n}x^{n}.    
\end{equation}
\end{prop}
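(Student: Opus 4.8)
The plan is to prove the equivalence \eqref{Borwein1}$\iff$\eqref{Borwein2} by recognizing both sides as encoding the same relation between the Dirichlet and Lambert transforms of a sequence, exploiting the Dirichlet-series identity $\eta(s) = \sum_{k\ge1}(-1)^{k-1}k^{-s}$ and the expansion of $\frac{x^n}{1+x^n}$ as a geometric-type series in $x^n$. First I would expand the left-hand side of \eqref{Borwein2}: since $\frac{x^n}{1+x^n} = \sum_{k\ge1}(-1)^{k-1}x^{nk}$ for $|x|<1$, we get $\sum_{n\ge1}a_n\frac{x^n}{1+x^n} = \sum_{n\ge1}\sum_{k\ge1}(-1)^{k-1}a_n x^{nk}$. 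Collecting the coefficient of $x^m$ on the left, this says $b_m = \sum_{nk=m}(-1)^{k-1}a_n = \sum_{d\mid m}(-1)^{m/d-1}a_d$, so \eqref{Borwein2} is equivalent to the arithmetic convolution statement $b_m = \sum_{d\mid m}(-1)^{m/d-1}a_d$ for all $m\ge1$.

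Next I would show \eqref{Borwein1} encodes exactly the same convolution. Writing $\eta(s)\sum_{n\ge1}a_n n^{-s} = \bigl(\sum_{k\ge1}(-1)^{k-1}k^{-s}\bigr)\bigl(\sum_{n\ge1}a_n n^{-s}\bigr)$ and multiplying the two Dirichlet series termwise, the coefficient of $m^{-s}$ in the product is $\sum_{nk=m}(-1)^{k-1}a_n$, which is precisely the $b_m$ identified above. Hence \eqref{Borwein1} holds if and only if $b_m = \sum_{d\mid m}(-1)^{m/d-1}a_d$ for every $m$, matching the characterization of \eqref{Borwein2}. Since each of the two displayed identities is equivalent to this common arithmetic relation, they are equivalent to one another, which is the claim. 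One should remark that the Lambert-series side makes sense as a formal power series identity (or for $|x|<1$ under mild growth on $a_n$), and the Dirichlet side likewise in a suitable half-plane; the coefficient-matching argument is purely formal and does not require any analytic continuation.

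The main obstacle is the bookkeeping of convergence and the legitimacy of rearranging the double sums. For the Lambert side one needs $|x|<1$ together with, say, $a_n = O(n^C)$ so that $\sum_{n,k}|a_n||x|^{nk}$ converges absolutely and Fubini applies; for the Dirichlet side one needs $\operatorname{Re}(s)$ large enough that both $\sum a_n n^{-s}$ and $\sum |a_n| n^{-\sigma}$ converge, so that the product of the two series may be rearranged as a Dirichlet series with coefficients given by the Dirichlet convolution. Since the proposition is being invoked here only in the concrete setting $a_n=1$ (to derive Teixeira's identity \eqref{eq:Teixeira} from $\eta(s)\zeta(s)$, or rather its companion), these hypotheses are trivially met, so I would state the equivalence at the level of formal identities and note in a sentence that the analytic justification in the cases of interest is routine, referring to \cite{Borwein} for the general statement.
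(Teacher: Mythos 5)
Your argument is correct, but note that the paper does not actually prove this proposition at all: it is imported verbatim from Borwein--Borwein (Problem 11, p.~300) and then merely applied, so there is no in-paper proof to match against. What you supply is the standard solution to that exercise: expand $\frac{x^n}{1+x^n}=\sum_{k\ge1}(-1)^{k-1}x^{nk}$, collect the coefficient of $x^m$ to see that \eqref{Borwein2} says $b_m=\sum_{d\mid m}(-1)^{m/d-1}a_d$, and observe that multiplying the Dirichlet series $\eta(s)=\sum_{k\ge1}(-1)^{k-1}k^{-s}$ against $\sum_n a_n n^{-s}$ produces exactly the same convolution as the coefficient of $m^{-s}$, so \eqref{Borwein1} encodes the identical arithmetic relation; the equivalence follows. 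This is sound, with the one caveat you already flag: the ``if and only if'' tacitly uses uniqueness of coefficients on both sides, so it should be read under hypotheses ensuring the power series has positive radius of convergence and the Dirichlet series converge (absolutely) in some half-plane --- e.g.\ polynomial growth of $a_n$, which holds trivially in the paper's application where $a_n$ is supported on powers of $2$ with $a_{2^l}=2^l$. Your proof therefore buys something the paper does not offer, namely a self-contained justification of the cited result, at the cost of a short bookkeeping discussion of Fubini and coefficient uniqueness that the citation sidesteps.
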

Let us apply this result to the sequence $\{a_n\}$ defined as
\begin{equation}
a_n=
\begin{cases}
2^l \text{ if } n=2^l\\
0 \text{ else}
\end{cases}
\end{equation}
to obtain, on the left-hand side of \eqref{Borwein2},
\begin{equation}
\sum_{n\ge1}a_n \frac{x^n}{1+x^n}=
\sum_{l\ge1}2^l \frac{x^{2^l}}{1+x^{2^l}}
\end{equation}
and, on the left-hand side of \eqref{Borwein1},
\begin{equation}
\eta(s)\sum_{n\ge1}\frac{a_n}{n^s}
=\eta(s)\sum_{l\ge0}\frac{1}{2^{l(s-1)}}
=\zeta(s).
\end{equation}
We deduce from \eqref{Borwein1} that
\begin{equation}
\sum_{n\ge1}\frac{b_n}{n^s} = \zeta(s)
\end{equation}
so that $b_n=1,\,\,n\ge1$ and, as a consequence, Teixeira's identity \eqref{eq:Teixeira}.

A simple interpretation of Teixeira's identity \eqref{eq:Teixeira}
is as follows:
\begin{equation}
\frac{t}{1-t}=\sum_{n\ge1}t^{n}
\end{equation}
is the generating function of the sequence $\left\{ 1,1,\dots\right\} :$
each integer $n$ is counted with a unit weight. The first term $\left(j=0\right)$
in the right-hand side of Teixeira's identity (\ref{eq:Teixeira})
is 
\begin{equation}
\frac{t}{1+t}=\sum_{n\ge1}\left(-1\right)^{n}t^{n}
\end{equation}
so that all even integers are weighted by $+1$ but the odd ones are
weighted by $\left(-1\right).$ The extra terms 
\begin{equation}
\sum_{j\ge1}2^{j}\frac{t^{2^{j}}}{1+t^{2^{j}}}
\end{equation}
will correct these negative weights so that the final sum has all positive
unit weights. More precisely:
\begin{align*}
\frac{t}{1+t} & =t-t^{2}+t^{3}-t^{4}+t^{5}-t^{6}+t^{7}-t^{8}+t^{9}-t^{10}+t^{11}-t^{12}\\
 & +t^{13}-t^{14}+t^{15}-t^{16}+t^{17}-t^{18}+t^{19}-t^{20}+O\left(t^{21}\right)
\end{align*}
\begin{align*}
\frac{t}{1+t}+2\frac{t^{2}}{1+t^{2}} & =t+t^{2}+t^{3}-3t^{4}+t^{5}+t^{6}+t^{7}-3t^{8}+t^{9}+t^{10}+t^{11}-3t^{12}\\
 & +t^{13}+t^{14}+t^{15}-3t^{16}+t^{17}+t^{18}+t^{19}-3t^{20}+O\left(t^{21}\right)
\end{align*}
\begin{align*}
\frac{t}{1+t}+2\frac{t^{2}}{1+t^{2}}+4\frac{t^{4}}{1+t^{4}} & =t+t^{2}+t^{3}+t^{4}+t^{5}+t^{6}+t^{7}-7t^{8}+t^{9}+t^{10}+t^{11}+t^{12}\\
 & +t^{13}+t^{14}+t^{15}-7t^{16}+t^{17}+t^{18}+t^{19}+t^{20}+O\left(t^{21}\right)
\end{align*}
so that all powers of $t$ in 
\begin{equation}
\sum_{j=0}^{J}2^{j}\frac{t^{2^{j}}}{1+t^{2^{j}}}
\end{equation}
have unit weight except, those $t$ with power $k2^{J+1},\,\,k\ge1$
that have weight $1-2^{J+1}.$ As $J\to\infty,$ only terms with unit
weight remain.

\subsubsection{A generalization of Teixeira's identity}
Now consider the functions
\begin{equation}
\varphi\left(z\right)=\frac{z}{1-z},\,\,\chi\left(z\right)=\frac{z-\left(q-1\right)z^{2}}{1-z^{2}};
\end{equation}
they satisfy the identity
\begin{equation}
\frac{1}{q}\left(\varphi\left(z\right)-\chi\left(z\right)\right)=\varphi\left(z^{2}\right).
\end{equation}
We deduce
\begin{equation}
\varphi\left(z\right)=\chi\left(z\right)+q\varphi\left(z^{2}\right)=\chi\left(z\right)+q\chi\left(z^{2}\right)+q^{2}\varphi\left(z^{4}\right)=\dots
\end{equation}
so that
\begin{align*}
\frac{z}{1-z} & =\sum_{k\ge0}q^{k}\chi\left(z^{2^{k}}\right)=\sum_{k\ge0}q^{k}\frac{z^{2^{k}}-\left(q-1\right)z^{2^{k+1}}}{1-z^{2^{k+1}}}\\
 & =\sum_{k\ge0,n\ge1}q^{k}\left(z^{\left(2n-1\right).2^{k}}-\left(q-1\right)z^{\left(2n\right).2^{k}}\right)
\end{align*}
which is the $a_{n}=z^{n}$ version of the generalized identity
\begin{equation}
\sum_{n\ge1}a_{n}=\sum_{k\ge0,n\ge1}q^{k}\left(a_{\left(2n-1\right).2^{k}}-\left(q-1\right)a_{\left(2n\right).2^{k}}\right).
\end{equation}



\subsection{The arbitrary base b case}

Consider the functions
\begin{equation}
\varphi\left(z\right)=\frac{z}{1-z},\,\,\chi\left(z\right)=\frac{z+z^{2}+\dots+z^{b-1}-\left(q-1\right)z^{b}}{1-z^{b}};
\end{equation}
they satisfy
\begin{equation}
\varphi\left(z\right)-\chi\left(z\right)=q\varphi\left(z^{b}\right)
\end{equation}
and we deduce by iterating
\begin{align*}
\varphi\left(z\right) & =\chi\left(z\right)+q\chi\left(z^{b}\right)+q^{2}\chi\left(z^{b^{2}}\right)+\dots\\
 & =\sum_{j\ge0}q^{j}\chi\left(z^{b^{j}}\right)
\end{align*}
so that
\begin{align*}
\frac{z}{1-z} & =\sum_{j\ge0}q^{j}\frac{z^{b^{j}}}{1-z^{b^{j+1}}}+q^{j}\frac{z^{2.b^{j}}}{1-z^{b^{j+1}}}+\dots+q^{j}\frac{z^{\left(b-1\right).b^{j}}}{1-z^{b^{j+1}}}-\left(q-1\right)q^{j}\frac{z^{b.b^{j}}}{1-z^{b^{j+1}}}\\
 & =\sum_{j\ge0,n\ge1}q^{j}z^{\left(bn-1\right).b^{j}}+q^{j}z^{\left(bn-2\right).b^{j}}+\dots+q^{j}z^{\left(bn-\left(b-1\right)\right).b^{j}}-\left(q-1\right)q^{j}z^{bn.b^{j}}.
\end{align*}

\section{A q-calculus application}

This section produces a $q-$calculus application of our main identity.
\begin{prop}
With the $q-$\textup{Pochhammer} symbol
\begin{equation}
\left(a;q\right)_{\infty}=\prod_{k\ge0}\left(1-aq^{k}\right),
\end{equation}
we have the identities, for $q\in\mathbb{C}$ such that $\vert q\vert<1,$
\begin{equation}
\prod_{j\ge0}\left(\frac{\left(q^{2^{j}};q^{2^{j+1}}\right)_{\infty}}{\left(q^{2^{j+1}};q^{2^{j+1}}\right)_{\infty}}\right)^{2^{j}}=\left(q;q\right)_{\infty},
\end{equation}
and more generally, for any complex number $p,$
\begin{equation}
\prod_{j\ge0}\left(\frac{\left(q^{2^{j}};q^{2^{j+1}}\right)_{\infty}}{\left(q^{2^{j+1}};q^{2^{j+1}}\right)_{\infty}^{p-1}}\right)^{p^{j}}=\left(q;q\right)_{\infty}
\end{equation}
and 
\begin{equation}
\prod_{j\ge0}\left(\frac{\left(q^{3^{j}};q^{3^{j+1}}\right)_{\infty}\left(q^{2.3^{j}};q^{3^{j+1}}\right)_{\infty}}{\left(q^{3^{j+1}};q^{3^{j+1}}\right)_{\infty}^{p-1}}\right)^{p^{j}}=\left(q;q\right)_{\infty}.
\end{equation}
The special case $p=1$ produces respectively
\begin{equation}
\label{QPochhammer2}    
\prod_{j\ge0}\left(q^{2^{j}};q^{2^{j+1}}\right)_{\infty}=\left(q;q\right)_{\infty}
\end{equation}
and
\begin{equation}
\prod_{j\ge0}\left(q^{3^{j}};q^{3^{j+1}}\right)_{\infty}\left(q^{2.3^{j}};q^{3^{j+1}}\right)_{\infty}=\left(q;q\right)_{\infty}.
\end{equation}
\end{prop}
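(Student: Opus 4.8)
The plan is to obtain all three displayed $q$-Pochhammer identities as direct specializations of the main multiplicative identity \eqref{main_multiplicative}, applied with the sequence
\[
a_m = 1 - q^m, \qquad m \ge 1,
\]
so that $\prod_{m\ge1} a_m = (q;q)_\infty$ (convergent for $|q|<1$). First I would record the elementary observation that, for any integers $r\ge0$ and $c\ge1$,
\[
\prod_{n\ge1}\bigl(1 - q^{(cn-r)\cdot c^{\,r'}}\bigr)
\]
assembled over the appropriate residues reconstitutes a $q$-Pochhammer of the form $\bigl(q^{a};q^{c}\bigr)_\infty$; concretely, grouping the factor $a_{(bn-k)\cdot b^{j}} = 1 - q^{(bn-k)\cdot b^{j}}$ over $n\ge1$ for fixed $j$ and $k$ gives $\bigl(q^{k\cdot b^{j}}; q^{b^{\,j+1}}\bigr)_\infty$, reading the exponents $(bn-k)b^j = kb^j, \,(b+k)b^j,\dots$ — wait, more carefully $(bn-k)b^j$ for $n=1,2,\dots$ runs through $(b-k)b^j,(2b-k)b^j,\dots$, i.e. it is $\bigl(q^{(b-k)b^j};q^{b^{j+1}}\bigr)_\infty$; relabeling $k\mapsto b-k$ turns the inner product $\prod_{k=1}^{b-1}$ into $\prod_{k=1}^{b-1}\bigl(q^{k b^j};q^{b^{j+1}}\bigr)_\infty$, and the $\chi$-factor over $n\ge1$ gives $\bigl(q^{b^{j+1}};q^{b^{j+1}}\bigr)_\infty$.

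Next I would choose $\varphi$ and $\chi$ to hit each target. For the general $b=2$ identity, take $\varphi(j)=p^{j}$ and $\chi(j)=(1-p)p^{j}$, which by case (4) of the first Example block (with $q$ there renamed $p$) makes the right-hand exponent $\varphi(\nu_2(m))+\sum_{k=0}^{\nu_2(m)-1}\chi(k)$ collapse to $1$ for every $m$; the left side is then exactly
\[
\prod_{j\ge0}\left(\frac{\bigl(q^{2^{j}};q^{2^{j+1}}\bigr)_\infty}{\bigl(q^{2^{j+1}};q^{2^{j+1}}\bigr)_\infty^{\,p-1}}\right)^{p^{j}},
\]
giving the second displayed identity; its $p=1$ case is \eqref{QPochhammer2}, and the first displayed identity is just $p=2$ (equivalently the original \eqref{eq:general}/\eqref{TwoJ} with $a_m=1-q^m$). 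For the base-$3$ identity I would instead invoke the base $3$ form of \eqref{main_multiplicative} with $\varphi(j)=p^{j}$, $\chi(j)=(1-p)p^{j}$, using the Corollary's base-$3$ $q$-parameterized identity directly; the two inner residues $k=1,2$ produce the numerator factors $\bigl(q^{3^{j}};q^{3^{j+1}}\bigr)_\infty\bigl(q^{2\cdot3^{j}};q^{3^{j+1}}\bigr)_\infty$ and $\chi$ produces $\bigl(q^{3^{j+1}};q^{3^{j+1}}\bigr)_\infty^{\,p-1}$ in the denominator, and again the exponent on the right telescopes to $1$, yielding $(q;q)_\infty$; $p=1$ gives the final displayed identity.

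The only genuine point requiring care — and the step I expect to be the main obstacle — is convergence and the legitimacy of the rearrangement implicit in \eqref{main_multiplicative} when applied to $a_m = 1-q^m$ with non-integer exponents $p^{j}$: one must check that $\prod_{j\ge0}\bigl(\cdots\bigr)^{p^{j}}$ converges, i.e. that $\sum_{j}|p|^{j}\bigl|\log\bigl(q^{2^{j}};q^{2^{j+1}}\bigr)_\infty - (p-1)\log\bigl(q^{2^{j+1}};q^{2^{j+1}}\bigr)_\infty\bigr|<\infty$. Since $\log(q^{a};q^{c})_\infty = -\sum_{k\ge0}\sum_{\ell\ge1}\frac{q^{(a+ck)\ell}}{\ell} = O(q^{a})$ as $a\to\infty$, the $j$-th term is $O\bigl(|p|^{j}|q|^{2^{j}}\bigr)$, which is summable for any fixed $|q|<1$ and any $p$ because $2^{j}$ outgrows $j\log|p|$; I would state this as the convergence lemma and note it justifies passing the logarithm through the double product and reindexing, after which the structural identity \eqref{main_multiplicative} applies verbatim. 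Everything else is bookkeeping of the exponent arithmetic already carried out in Section~\ref{sec:A-general-case}.
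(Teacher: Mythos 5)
Your proposal is correct and follows essentially the same route as the paper: the paper also specializes its parameterized multisection identity (the $\varphi(j)=p^{j}$, $\chi(j)=(1-p)p^{j}$ case of \eqref{main_multiplicative}, i.e. \eqref{eq:q-case} and its base-$3$ corollary) to $a_{m}=1-q^{m}$ and regroups the inner products over $n$ into the factors $\bigl(q^{k\cdot b^{j}};q^{b^{j+1}}\bigr)_{\infty}$ and $\bigl(q^{b^{j+1}};q^{b^{j+1}}\bigr)_{\infty}$ exactly as you do. Your added convergence estimate for the non-integer exponents $p^{j}$ is a harmless supplement that the paper leaves implicit.
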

\begin{proof}
In the identity
\begin{equation}
\prod_{n\ge1,j\ge0}\left(\frac{a_{\left(2n-1\right).2^{j}}}{a_{2n.2^{j}}}\right)^{2^{j}}=\prod_{n\ge1}a_{n},
\end{equation}
take $a_{n}=1-q^{n}$ so that the right-hand side is $\left(q;q\right)_{\infty}$.
Moreover,
\begin{equation}
\prod_{n\ge1,j\ge0}\left(1-q^{\left(2n-1\right).2^{j}}\right)=\prod_{n\ge0,j\ge0}\left(1-q^{\left(2n+1\right).2^{j}}\right)=\prod_{j\ge0}\left(q^{2^{j}};q^{2^{j+1}}\right)_{\infty}
\end{equation}
and
\begin{equation}
\prod_{n\ge1,j\ge0}\left(1-q^{2n.2^{j}}\right)=\prod_{n\ge0,j\ge0}\left(1-q^{\left(2n+2\right).2^{j}}\right)=\prod_{j\ge0}\left(q^{2^{j+1}};q^{2^{j+1}}\right)_{\infty}
\end{equation}
so that
\begin{equation}
\prod_{j\ge0}\left(\frac{\left(q^{2^{j}};q^{2^{j+1}}\right)_{\infty}}{\left(q^{2^{j+1}};q^{2^{j+1}}\right)_{\infty}}\right)^{2^{j}}=\left(q;q\right)_{\infty}.
\end{equation}
In the more general case
\begin{equation}
\prod_{n\ge1,j\ge0}\left(\frac{a_{\left(2n-1\right).2^{j}}}{a_{2n.2^{j}}^{p-1}}\right)^{p^{j}}=\prod_{n\ge1}a_{n},
\end{equation}
we deduce
\begin{equation}
\prod_{j\ge0}\left(\frac{\left(q^{2^{j}};q^{2^{j+1}}\right)_{\infty}}{\left(q^{2^{j+1}};q^{2^{j+1}}\right)_{\infty}^{p-1}}\right)^{p^{j}}=\left(q;q\right)_{\infty}.
\end{equation}
For example, $p=3$ produces
\begin{equation}
\prod_{j\ge0}\left(\frac{\left(q^{2^{j}};q^{2^{j+1}}\right)_{\infty}}{\left(q^{2^{j+1}};q^{2^{j+1}}\right)_{\infty}^{2}}\right)^{3^{j}}=\left(q;q\right)_{\infty}.
\end{equation}

In the base 3 case, using 
\begin{equation}
\prod_{n\ge1,j\ge0}\left(\frac{a_{\left(3n-1\right).3^{j}}a_{\left(3n-2\right).3^{j}}}{a_{3n.3^{j}}^{p-1}}\right)^{p^{j}}=\prod_{n\ge1}a_{n},
\end{equation}
we have
\begin{equation}
\prod_{n\ge1,j\ge0}\left(1-q^{\left(3n-1\right).3^{j}}\right)=\prod_{n\ge0,j\ge0}\left(1-q^{\left(3n+2\right).3^{j}}\right)=\prod_{j\ge0}\left(q^{2.3^{j}};q^{3^{j+1}}\right)_{\infty}
\end{equation}
and
\begin{equation}
\prod_{n\ge1,j\ge0}\left(1-q^{\left(3n-2\right).3^{j}}\right)=\prod_{n\ge0,j\ge0}\left(1-q^{\left(3n+1\right).3^{j}}\right)=\prod_{j\ge0}\left(q^{3^{j}};q^{3^{j+1}}\right)_{\infty}
\end{equation}
and
\begin{equation}
\prod_{n\ge1,j\ge0}\left(1-q^{3n.3^{j}}\right)=\prod_{n\ge0,j\ge0}\left(1-q^{\left(3n+3\right).3^{j}}\right)=\prod_{j\ge0}\left(q^{3^{j+1}};q^{3^{j+1}}\right)_{\infty}
\end{equation}
so that
\begin{equation}
\prod_{j\ge0}\left(\frac{\left(q^{3^{j}};q^{3^{j+1}}\right)_{\infty}\left(q^{2.3^{j}};q^{3^{j+1}}\right)_{\infty}}{\left(q^{3^{j+1}};q^{3^{j+1}}\right)_{\infty}^{p-1}}\right)^{p^{j}}=\left(q;q\right)_{\infty}.
\end{equation}
\end{proof}

This result can be extended to the more general case of the $q-$Pochhammer
\begin{equation}
\left(a;q\right)_{\infty}=\prod_{n\ge0}\left(1-aq^{n}\right)
\end{equation}
by considering the sequence
\begin{equation}
a_{n}=1-aq^{n-1},\,\,n\ge1.
\end{equation}
\begin{prop}
The following multisection formula holds:
\begin{equation}
\prod_{j\ge0}\left(\frac{\left(aq^{2^{j}-1};q^{2^{j+1}}\right)_{\infty}}{\left(aq^{2^{j+1}-1};q^{2^{j+1}}\right)_{\infty}}\right)^{2^{j}}=\left(a;q\right)_{\infty}.
\end{equation}
\end{prop}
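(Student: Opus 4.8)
The plan is to specialize the structural identity \eqref{eq:general} (equivalently \eqref{TwoJ}), exactly as in the proof of the preceding proposition, but now to the shifted sequence
\begin{equation}
a_{n}=1-aq^{n-1},\qquad n\ge1.
\end{equation}
First I would observe that the right-hand side of \eqref{eq:general} collapses immediately: reindexing $m=n-1$,
\begin{equation}
\prod_{n\ge1}a_{n}=\prod_{n\ge1}\bigl(1-aq^{n-1}\bigr)=\prod_{m\ge0}\bigl(1-aq^{m}\bigr)=\left(a;q\right)_{\infty}.
\end{equation}

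Next I would evaluate the two sub-products appearing in the left-hand side of \eqref{eq:general}. For the odd-index factor, for fixed $j$ the integers $(2n-1)2^{j}$ with $n\ge1$ run through $\{2^{j}(2\ell+1):\ell\ge0\}$, so
\begin{equation}
\prod_{n\ge1}\bigl(1-aq^{(2n-1)2^{j}-1}\bigr)=\prod_{\ell\ge0}\bigl(1-aq^{2^{j}-1}q^{2^{j+1}\ell}\bigr)=\bigl(aq^{2^{j}-1};q^{2^{j+1}}\bigr)_{\infty},
\end{equation}
while for the even-index factor $(2n)2^{j}=2^{j+1}n$ with $n\ge1$, so
\begin{equation}
\prod_{n\ge1}\bigl(1-aq^{(2n)2^{j}-1}\bigr)=\prod_{n\ge1}\bigl(1-aq^{2^{j+1}-1}q^{2^{j+1}(n-1)}\bigr)=\bigl(aq^{2^{j+1}-1};q^{2^{j+1}}\bigr)_{\infty}.
\end{equation}
Substituting these two evaluations (and the value of the right-hand side) into \eqref{eq:general} then yields the claimed formula term by term in $j$.

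The only genuine point to check — and the one I expect to be the main obstacle, though it is mild — is the convergence hypothesis required by \eqref{eq:general}. For $\vert q\vert<1$ we have $\sum_{n\ge1}\vert aq^{n-1}\vert<\infty$, so $\prod_{n\ge1}(1-aq^{n-1})$ converges (absolutely), which is precisely the assumption under which the structural identity holds; the factorwise rearrangement in \eqref{eq:general} is then legitimate. One should also note in passing that no factor $a_{n}$ is an obstruction to the manipulation even when $aq^{n-1}=1$ for some $n$, since the identity is purely a statement about how indices are regrouped. Finally I would remark that the same computation with the base-$b$ version of the structural identity gives the analogous $b$-section of $(a;q)_{\infty}$, so the base-$2$ statement above is only the simplest instance.
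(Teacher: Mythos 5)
Your proposal is correct and follows essentially the same route as the paper: both apply the base-2 structural identity \eqref{eq:general}/\eqref{TwoJ} to the sequence $a_{n}=1-aq^{n-1}$, evaluate the odd- and even-index sub-products as $\left(aq^{2^{j}-1};q^{2^{j+1}}\right)_{\infty}$ and $\left(aq^{2^{j+1}-1};q^{2^{j+1}}\right)_{\infty}$ respectively, and identify the right-hand side as $\left(a;q\right)_{\infty}$. Your added convergence remark for $\vert q\vert<1$ is a harmless supplement the paper omits.
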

\begin{proof}
We have
\begin{align*}
\prod_{j\ge0,n\ge1}\left(\frac{a_{\left(2n-1\right).2^{j}}}{a_{\left(2n\right).2^{j}}}\right)^{2^{j}} & =\prod_{j\ge0,n\ge1}\left(\frac{1-aq^{\left(2n-1\right).2^{j}-1}}{1-aq^{\left(2n\right).2^{j}-1}}\right)^{2^{j}}\\
 & =\prod_{j\ge0}\left(\frac{\prod_{n\ge0}\left(1-aq^{\left(2n+1\right).2^{j}-1}\right)}{\prod_{n\ge0}\left(1-aq^{\left(2n+2\right).2^{j}-1}\right)}\right)^{2^{j}}\\
 & =\prod_{j\ge0}\left(\frac{\prod_{n\ge0}\left(1-aq^{2^{j}-1}q^{n.2^{j+1}}\right)}{\prod_{n\ge0}\left(1-aq^{2^{j+1}-1}q^{n.2^{j+1}}\right)}\right)^{2^{j}}\\
 & =\prod_{j\ge0}\left(\frac{\left(aq^{2^{j}-1};q^{2^{j+1}}\right)_{\infty}}{\left(aq^{2^{j+1}-1};q^{2^{j+1}}\right)_{\infty}}\right)^{2^{j}}.
\end{align*}
\end{proof}

\begin{rem}
A combinatorial interpretation of identity \eqref{QPochhammer2} is as follows: rewrite \eqref{QPochhammer2} as
\begin{equation}
\label{QPochhammer2inv}    
\frac{1}{\left(q;q\right)_{\infty}}
=\prod_{j\ge0}\frac{1}{\left(q^{2^{j}};q^{2^{j+1}}\right)_{\infty}}.
\end{equation}
The left-hand side is the generating function for the number of partitions $p(n)$ of the integer $n$.
More generally, the infinite product
\begin{equation}
\prod_{i \in I}\frac{1}{1-q^i}
\end{equation}
is the generating function for the number of partitions $p_{I}(n)$ of $n$ with parts belonging to the set $I.$ Hence
\begin{equation}
\frac{1}{\left(q^{2^{j}};q^{2^{j+1}}\right)_{\infty}}
=\prod_{i \in 
I_{j}
}\frac{1}{1-q^i}
\end{equation}
is the generating function for the number of partitions of $n$ with parts in the set $I_{j}=\{2^{j}+\mathbb {N}.2^{j+1},\,\,j\ge0\}.$
It is easy to check that the set of subsets $I_{j},\,\,j\ge 0$ forms a partition of $\mathbb{N}$, i.e.
\begin{equation} \cup_{j\ge 0} I_j = \mathbb{N}\text{ and }
I_j \cap I_k = \varnothing,\,\, j \ne k.
\end{equation}
In fact, the partition $ \cup_{j\ge 0} I_j = \mathbb{N}$ has an elementary interpretation in base 2: since $I_j=\{n:\nu_2(n)=j\}$, this partition decomposes the set of integers $n$ according to their $2-$valuation.

\end{rem}
\section{Applications} \label{sec:Apps}

We begin with a simple application of Example \eqref{eq:q-case}.   
Let

\begin{equation}
 a (m)=1+{x^{2}}/{m^{2}} 
 \label{Am2}
 \end{equation}

so that
\begin{equation}
\prod_{j\ge0,n\ge1}{\left({\displaystyle\frac{a ((2 n -1). 2^{j})}{(a (2 n ).2^{j})^{q-1}}}\right)}^{\displaystyle q^{ j}} =
{ \moverset{\infty}{\munderset{m =1}{\textcolor{gray}{\prod}}}(1+{x^{2}}/{m^{2}})={\sinh (\pi  x)}/{x \pi} }
\label{Ls}
\end{equation}

which, after substitution in the series equivalent case \eqref{q-caseLn}, becomes

\begin{equation}
\sum_{j\ge0,n\ge1}q^{j} \left(\ln ({\displaystyle 1+\frac{x^{2}}{(2 n -1)^{2}2^{2j}})}-(q -1) \ln ({\displaystyle 1+{ \frac{ x^{2}}{ 2^{2j+2}n^{2}}}})\right)= 
{ \ln (\frac{\sinh (\pi  x)}{x \pi}) }\,.
\label{Start}
\end{equation}

\subsection{By differentiation} \label{sec:difEx1}

It is interesting to differentiate \eqref{Start} once with respect to x, and, since there is no overall \textit{q} dependence, letting \textit{q}\textit{=1} solves the double summation 

\begin{equation}
\sum_{j\ge0,n\ge1}{1}/{\left(n (n -1) 2^{2+2 j}+x^{2}+4^{j}\right)}={(\pi  x \coth (\pi  x)-1)}/{2 x^{2}} .
\label{St1a}
\end{equation}

However, since the inner sum is known \cite{Maple23}, \cite{Math23}, that is

\begin{equation}
{ \moverset{\infty}{\munderset{n =1}{\textcolor{gray}{\sum}}}{1}/{(n (n -1) 2^{2+2 j}+x^{2}+4^{j})}={\pi  \tanh ({ {\pi  x}/{2^{(1+j)}}})}/{\left(2^{j +2} x\right)} }
\label{St1b}
\end{equation}

we eventually reproduce a listed (telescoping-based) identity \cite[Eq.  (43.6.4)]{Hansen}

\begin{equation}
   { \moverset{\infty}{\munderset{j =0}{\textcolor{gray}{\sum}}}\tanh ({  x}/{2^{1+j}})/ 2^{j}={2 ( x \coth ( x)-1)}/{  x} }.
   \label{H43}
\end{equation}

Continuing, by differentiating \eqref{Start} with respect to \textit{q}, we obtain 

\begin{equation}
\sum_{j\ge0,n\ge1}j \ln ({1+\displaystyle \frac{x}{2^{2 j}\,(2 n -1)^{2} }})=\sum_{j\ge0,n\ge1}\ln ({\displaystyle 1+\frac{x}{ n^{2}\, 2^{2j+2}}}) 
\label{St3}
\end{equation}

with \textit{q=1.} Again, differentiating \eqref{St3} with respect to x yields the transformation

\begin{equation}
 { \moverset{\infty}{\munderset{j =1}{\textcolor{gray}{\sum}}}\frac{j }{2^{j}}\tanh (\frac{x}{2^{1+j}})=\frac{2}{x} \moverset{\infty}{\munderset{j =0}{\textcolor{gray}{\sum}}}(\frac{x}{2^{1+j}} \coth (\frac{x}{2^{1+j}})-1) }\textit{ .} \label{St4a}  
\end{equation}

\subsection{By power series expansion} \label{sec:PsEx1}
In the following, extensive use is made of the identity \cite{Dieckmann}
\begin{equation}
    \moverset{\infty}{\munderset{k =1}{\prod}}\! \left(1+\left(\frac{x}{k +b}\right)^{n}\right)
 = 
\frac{\Gamma \! \left(1+b \right)^{n} }{b^{n}+x^{n}}\moverset{n}{\munderset{k =1}{\prod}}\! \frac{1}{\Gamma \left(b -x\,{\mathrm e}^{{i\,\pi  \left(2\,k +1\right)}/{n}}\right)}\,,
\label{Dkm}
\end{equation}
valid for $x\in \mathbb{C}$. See also \cite[Eq (27)]{InfProduct}.
Consider the case
\begin{equation}
    a(m)=1-x^k/m^k,\hspace{30pt} m>1
    \label{Ex5b3}
\end{equation}
and recall for example, the simple identity

\begin{equation}
    \ln \! \left(1-\frac{x^{k}}{\left(\left(3 n -2\right) 3^{j}\right)^{k}}\right)
 = 
-\moverset{\infty}{\munderset{p =1}{\textcolor{gray}{\sum}}}\! \frac{1}{p}\left(\frac{x^{k}}{\left(\left(3 n -2\right) 3^{j}\right)^{k}}\right)^{p},\hspace{20pt} |x|<3.
\label{Lex2}
\end{equation}

From \eqref{X5b3}  and \eqref{Lex2} we have, after the (convergent) sums are reordered,
\begin{align}  \label{Ls3C}
     \sum_{j\ge1,n\ge1} &\left(\ln \! \left(a \! \left(\left(3 n -1\right). 3^{j}\right)\right)+\ln \! \left(a \! \left(\left(3 n -2\right). 3^{j}\right)\right)\right)\\ \nonumber
 &= \sum_{p\ge1,j\ge1} \frac{\left({x}/{3^{j}}\right)^{k p}}{p} \moverset{\infty}{\munderset{n =1}{\textcolor{gray}{\sum}}}\! \left(\left(\frac{1}{\left(3 n -1\right)^{k}}\right)^{p}+\left(\frac{1}{\left(3 n -2\right)^{k}}\right)^{p}\right)\,\hspace{10pt}|x|<3.
\end{align}

The sum indexed by $j$ is trivially evaluated, and the sum indexed by $n$ is recognizable as a series representation of a special case of the Hurwitz Zeta function $\zeta \left(s , a\right) \equiv 
\moverset{\infty}{\munderset{n =0}{\sum}}\! {1}/{\left(n +a \right)^{s}}
$, leading to the identity
\begin{equation}
    \moverset{\infty}{\munderset{p =1}{\textcolor{gray}{\sum}}}\! \frac{x^{k p} }{\left(3^{k p}-1\right) p}\left(\zeta \left(k p , \frac{2}{3}\right)+\zeta \left(k p , \frac{1}{3}\right)\right)
 = 
\ln \! \left(-\moverset{\quad\; k -1}{\munderset{\quad\; j =0}{{x^{k}\;\prod}}}\! \Gamma \! \left(-x\,{\mathrm e}^{{2 \,\mathrm{I} \pi  j}/{k}}  \right) \right),\hspace{10pt} |x|<1,
\label{Ls3e2}
\end{equation}

where the right-hand side arises from \eqref{Dkm}, and we have replaced $x:=3x$. Because $k$ and $p$ are both integers and $k>1$, \eqref{Ls3e2} can also be rewritten using \cite[Eq. 25.11.12]{NIST} as
\begin{equation}
\moverset{\infty}{\munderset{p =1}{\sum}}\! \frac{\left(\psi \! \left(k\,p -1, \frac{2}{3}\right)+\psi \! \left(k\,p -1, \frac{1}{3}\right)\right) \left(-x \right)^{k\,p}}{\left(3^{k\,p}-1\right) \Gamma \! \left(k\,p +1\right)}
 = 
\frac{1}{k}\ln \! \left(-x^{k}\moverset{k -1}{\munderset{j =0}{\prod}}\! \Gamma \! \left(-x\;{\mathrm e}^{\frac{2\,i\,\pi \,j}{k}} \right)\right) 
\hspace{20pt} |x|<1,
\label{Ls3e2p}
\end{equation}
where $\psi(n,x)$ is the polygamma function. For the case \eqref{Ls3e2} using k=2, we find 

\begin{equation}
 \moverset{\infty}{\munderset{p =1}{\sum}}\! \frac{x^{2\,p} \left(\psi \! \left(2\,p -1, \frac{2}{3}\right)+\psi \! \left(2\,p -1, \frac{1}{3}\right)\right)}{\left(3^{2\,p}-1\right) \Gamma \! \left(2\,p +1\right)}
 = 
\frac{1}{2}\ln \! \left(\frac{x\,\pi}{\sin \left(\pi \,x \right)}\right)\hspace{20pt} |x|<1,
\label{Lk2}
\end{equation}
for $k=3$ we obtain
\begin{equation}
    \moverset{\infty}{\munderset{p =1}{\sum}}\! \frac{\left(\psi \! \left(3\,p -1, \frac{2}{3}\right)+\psi \! \left(3\,p -1, \frac{1}{3}\right)\right) \left(-1\right)^{p}\,x^{3\,p}}{\left(3^{3\,p}-1\right) \Gamma \! \left(3\,p +1\right)}
 = 
\frac{1}{3}\ln \! \left({x^{2}\,\left| \Gamma \! \left(-\frac{i\,\sqrt{3}\,x}{2}+\frac{x}{2}\right)\right|}^{2}\,\Gamma \! \left(1-x \right)\right)
\label{Lk3}
\end{equation}
and by setting $x:= i\,x$ and $k:=2k$ in \eqref{Ls3e2p} we discover \begin{equation}
    \moverset{\infty}{\munderset{p =1}{\sum}}\! \frac{\left(-1\right)^{p}\,x^{2\,k\,p} \left(\psi \! \left(2\,k\,p -1, \frac{2}{3}\right)+\psi \! \left(2\,k\,p -1, \frac{1}{3}\right)\right)}{\Gamma \! \left(2\,k\,p +1\right) \left(3^{2\,k\,p}-1\right)}
 = 
\frac{\ln \! \left(x^{2\,k}\,\moverset{2\,k -1}{\munderset{j =0}{\prod}}\! \Gamma \! \left(-i\,x\,{\mathrm e}^{\frac{i\,\pi \,j}{k}} \right)\right)}{2\,k}\,
\label{Leven}
\end{equation}
corresponding to the related case
\begin{equation}
    a(m)=1+x^k/m^k,\hspace{30pt} m>1\,.
    \label{Ex5b4}
\end{equation}
Similarly let $x:=ix$ and $k:=2k+1$, to find
\begin{align}  \label{Lodd}
    \moverset{\infty}{\munderset{p =1}{\sum}}\! \frac{\left(\psi \! \left(\left(2\,k +1\right) p -1, \frac{2}{3}\right)+\psi \! \left(\left(2\,k +1\right) p -1, \frac{1}{3}\right)\right) x^{\left(2\,k +1\right) p}\,{\mathrm e}^{-\frac{i\,\pi \,p}{2}} \left(-1\right)^{k\,p}}{\left(3^{\left(2\,k +1\right) p}-1\right) \Gamma \! \left(\left(2\,k +1\right) p +1\right)} \\\nonumber
 = 
\frac{\ln \! \left(-i\,\left(-1\right)^{k}\,x^{2\,k +1} \moverset{2\,k}{\munderset{j =0}{\prod}}\! \Gamma \! \left(-i\,x\,{\mathrm e}^{\frac{2\,i\,\pi \,j}{2\,k +1}} \right)\right) }{2\,k +1}
\end{align}

Choosing $k=1$ in \eqref{Leven}, or $k=0$ in the real part of \eqref{Lodd}, gives
\begin{equation}
   \moverset{\infty}{\munderset{p =1}{\sum}}\! \frac{\left(-1\right)^{p}\,x^{2\,p} \left(\psi \! \left(2\,p -1, \frac{2}{3}\right)+\psi \! \left(2\,p -1, \frac{1}{3}\right)\right)}{\left(3^{2\,p}-1\right) \Gamma \! \left(2\,p +1\right)}
 = 
\frac{1}{2}\ln \! \left(\frac{x\,\pi}{\sinh \left(\pi \,x \right)}\right)\,.
\label{Lodd2aR}
\end{equation}
\subsection{}
We continue with another simple example that utilizes the methods applied above. Let \begin{equation}
    a(m)=1/(1+x^k/m^k).
    \label{AmEx1}
\end{equation}
Then, according to \eqref{eq:general} 
\begin{equation}
    \moverset{\infty}{\munderset{j =0}{\prod}}\! {\left(\moverset{\infty}{\munderset{n =1}{\prod}}\! {\left(1+\frac{x^{s}}{\left(2^{j +1}\right)^{s}\,n^{s}}\right)}/{\left(1+\frac{x^{s}}{\left(2^{j +1}\right)^{s} \left(n -\frac{1}{2}\right)^{s}}\right)}\right)}^{2^{j}}
 = 
\moverset{\infty}{\munderset{m =1}{\prod}}\! \frac{1}{1+\frac{x^{s}}{m^{s}}}\,,
\label{Ls1Rs1}
\end{equation}
a result that can be tested numerically for $s\geq 2$. In the case that $s=n$ using \eqref{Dkm} we find the general identity
\begin{align} \label{Ls2}
    \moverset{\infty}{\munderset{j =0}{\prod}}\! {\left(\frac{\left(1+\left(-\frac{2^{j}}{x}\right)^{n}\right) }{\pi^{\frac{n}{2}}}\moverset{n}{\munderset{k =1}{\prod}}\! \frac{\Gamma \left(-\frac{1}{2}-\frac{x}{2^{j +1}}\,{\mathrm e}^{{i\,\pi  \left(2\,k +1\right)}/{n}}\right)}{\Gamma \left(-\frac{x}{2^{j +1}}\,{\mathrm e}^{{i\,\pi  \left(2\,k +1\right)}/{n}}\right)}\right)}^{2^{j}}
 = \moverset{\infty}{\munderset{m =1}{\prod}}\! \frac{1}{1+{x^{n}}/{m^{n}}}
\end{align}
If $n=2$ in \eqref{Ls2}, after some simplification involving \cite[Eqs. (5.4.3) and (5.4.4)] {NIST}, we arrive at the original curious and provocative result \eqref{eq:tan}. In the case that $n=4$ and $x\in \mathbb{C}$, after further simplification and the redefinition $x:=2\sqrt{2}\,x/\pi$, we find
\begin{equation}
\moverset{\infty}{\munderset{j =1}{\prod}}\! {\left(\frac{ 2^{2\,j}}{2\,x^{2}}\tan \! \left(\frac{\left(1+i\right) x}{2^{j}}\right) \tan \! \left(\frac{\left(1-i\right) x}{2^{j}}\right)\right)}^{2^{j}}
 = 
\frac{16\,x^{4}}{{\left(2 \left(\cosh^{2}\left(x \right)\right)-1-\cos \! \left(2\,x \right)\right)}^{2}}
\,.
\label{Ls5d}
\end{equation}
\subsection{A sum involving $\zeta(k)$}
With the slight variation $a(m)=1+x^k/m^k$ applied to \eqref{q-caseLn} using $q=1$ and analyzed as above, we find the identities
\begin{equation}
   \moverset{\infty}{\munderset{j =1}{\sum}}\! \frac{\left(-1\right)^{j +1}\,x^{2\,j\,k}\,\zeta \! \left(2\,j\,k \right)}{j}
 = 
\ln \! \left(\frac{\left(-i\right)^{k}}{\pi^{k}\,x^{k}} \moverset{k}{\munderset{j =1}{\prod}}\! \sin \! \left(\left(-1\right)^{\frac{2\,j -1}{2\,k}}\,\pi \,x \right)\right)
\label{G1}
\end{equation}
and
\begin{equation}
    \moverset{\infty}{\munderset{j =1}{\sum}}\! \frac{\left(-1\right)^{j +1}\,x^{\left(2\,k +1\right)\, j}\,\zeta \! \left(\left(2\,k +1\right) j \right)}{j}
 = 
\ln \! \left(\frac{1}{\Gamma \! \left(x +1\right)}\moverset{2\,k}{\munderset{j =1}{\prod}}\! \frac{1}{\pi}\,\sin \left(\pi  \left(-1\right)^{\frac{j \left(2+2\,k \right)}{2\,k +1}}\,x \right) \Gamma \left(\left(-1\right)^{\frac{j \left(2+2\,k \right)}{2\,k +1}+1}\,x \right)\right)
\label{G2}
\end{equation}
where the right-hand sides of both are available from \eqref{Dkm}. In the case $k=1$, \eqref{G2} reduces to
\begin{equation}
    \moverset{\infty}{\munderset{k =1}{\sum}}\! \frac{x^{k}\,\zeta \! \left(3\,k \right)}{k}
 = 
\ln \! \left(\Gamma \! \left(1-x^{\frac{1}{3}}\right) \Gamma \! \left(1+\frac{x^{\frac{1}{3}} \left(1+i\,\sqrt{3}\right)}{2}\right) \Gamma \! \left(1+\frac{x^{\frac{1}{3}} \left(1-i\,\sqrt{3}\right)}{2}\right)\right) \hspace{20pt} x<1,
\label{St3x}
\end{equation}
an identity known to Mathematica. See also \cite[section 3.4]{Sriv&Choi}
\subsection{An infinite product study}
Consider the case
\begin{equation}
    a \! \left(m \right) = 1+{\mathrm e}^{-\pi  \left(2\,m +1\right)}
    \label{H89}
\end{equation}
given that \cite[Eq. (89.21.4)]{Hansen}
\begin{equation}
    \moverset{\infty}{\munderset{m =1}{\prod}}\! \left(1+{\mathrm e}^{-\pi  \left(2\,m +1\right)}\right)
 = 
\frac{2^{\frac{1}{4}}\,{\mathrm e}^{-\frac{\pi}{24}}}{1+{\mathrm e}^{-\pi}}\,.
\label{H1a}
\end{equation}
Apply \eqref{H89} to \eqref{q-caseLn} with $q=1$ (see Example \ref{eq:X4}) to obtain the additive multisection
\begin{equation}
    \sum_{j\ge0,k\ge1} \frac{{\mathrm e}^{-k\,\pi} \left(-1\right)^{k}}{k\,\sinh \! \left(2\,k\,\pi \,2^{j}\right)}
 = 
-2\,\ln \! \left(\frac{2^{\frac{1}{4}}\,{\mathrm e}^{-{\pi}/{24}}}{1+{\mathrm e}^{-\pi}}\right)\,
\label{Ls3B}
\end{equation}
after expanding the logarithmic terms in analogy to \eqref{Lex2}. The outer sum (over $j$) can be evaluated by applying the listed identity \cite[Eq. (25.1.1)]{Hansen} after setting $x:=ix$ and evaluating the limit $n\rightarrow \infty$ in that identity, to yield
\begin{equation}
    \moverset{\infty}{\munderset{j =0}{\sum}}\! \frac{1}{\mathrm{sinh}\! \left(2\,k\,\pi \,2^{j}\right)}
 = 
\coth \! \left(2\,k\,\pi \right)-1+\mathrm{csch}\! \left(2\,k\,\pi \right)\,,
\label{H25p1p1}
\end{equation}
in which case \eqref{Ls3B} reduces, after some simplification, to
\begin{equation}
    \moverset{\infty}{\munderset{k =1}{\sum}}\! \frac{\left(-1\right)^{k}\,{\mathrm e}^{-k\,\pi} \coth \! \left(k\,\pi \right)}{k}
 = 
\ln \! \left(\frac{\left(1+{\mathrm e}^{-\pi}\right)\, {\mathrm e}^{{\pi}/{12}}}{\sqrt{2}}\right)\,.
\label{Ls3D}
\end{equation}
Comparing \eqref{Ls3D} with a naive expansion of $\ln \left(1+{\mathrm e}^{-\pi  \left(2\,m +1\right)}\right)$ in \eqref{H1a}, verifies the simple transformation
\begin{equation}
    \moverset{\infty}{\munderset{k =1}{\sum}}\! \frac{\left(-1\right)^{k}\,{\mathrm e}^{-2\,k\,\pi}}{k\,\sinh \! \left(k\,\pi \right) }
 = 
\moverset{\infty}{\munderset{k =1}{\sum}}\! \frac{\left(-1\right)^{k}\,{\mathrm e}^{-k\,\pi} \coth \! \left(k\,\pi \right)}{k}+\ln \! \left(1+{\mathrm e}^{-\pi}\right),
\label{Ls3E}
\end{equation}
so that, comparing \eqref{Ls3D} and \eqref{Ls3E} identifies
\begin{equation}
    \moverset{\infty}{\munderset{k =1}{\sum}}\! \frac{\left(-1\right)^{k}\,{\mathrm e}^{-2\,k\,\pi}}{k\,\sinh \! \left(k\,\pi \right) }
 = 
\ln \! \left(\frac{\left(1+{\mathrm e}^{-\pi}\right)^{2}\,{\mathrm e}^{{\pi}/{12}}}{\sqrt{2}}\right)\,.
\label{Ls3F}
\end{equation}

Similarly, letting $q=2$ in \eqref{q-caseLn} leads to

\begin{equation}
    \moverset{\infty}{\munderset{j =0}{\sum}}\! 2^{j} \moverset{\infty}{\munderset{k =1}{\sum}}\! \frac{\left(-1\right)^{k}}{k} \moverset{\infty}{\munderset{n =1}{\sum}}\! \left(-{\mathrm e}^{-k \left(2 \left(2\,n -1\right) 2^{j}+1\right) \pi}+{\mathrm e}^{-k \left(4\,n\,2^{j}+1\right) \pi}\right)
 = 
\ln \! \left(\frac{2^{\frac{1}{4}}\,{\mathrm e}^{-\frac{\pi}{24}}}{1+{\mathrm e}^{-\pi}}\right)\,.
\label{H1Lb}
\end{equation}
The innermost sum (over $n$) can be written in terms of hyperbolic functions, eventually producing the double sum identity
\begin{equation}
   \moverset{\infty}{\munderset{k =1}{\sum}}\! \frac{\left(-1\right)^{k} }{k}\moverset{\infty}{\munderset{j =0}{\sum}}\! \frac{2^{j}\,{\mathrm e}^{-k\,\pi  \left(2^{j}+1\right)}}{\cosh \left(k\,\pi \,2^{j}\right)}
 = 
\ln \! \left(\frac{\left(1+{\mathrm e}^{-\pi}\right)^{2}\,{\mathrm e}^{{\pi}/{12}}}{\sqrt{2}}\right)\,
\label{H1Ld}
\end{equation}
after transposing the two sums. Noting the equality of the right-hand sides of \eqref{Ls3F} and \eqref{H1Ld}, suggests that the inner sum of \eqref{H1Ld} (over $j$) equates to the equivalent terms in the summand of \eqref{Ls3F}, i.e.
\begin{equation}
   \moverset{\infty}{\munderset{j =0}{\sum}}\frac{2^{j}\,{\mathrm e}^{-k\,\pi  \left(2^{j}+1\right)}}{\cosh \! \left(k\,\pi \,2^{j}\right)}
 = \frac{{\mathrm e}^{-2\,k\,\pi}}{\sinh \! \left(k\,\pi \right)}\,,
\label{Cj}
\end{equation}
a proof of which can be found in Appendix \ref{sec:Proof8p34}.
\subsection{Application of (\ref{eq:1/(k+1)})}

Consider the case \eqref{eq:1/(k+1)} using $a_{m}=1+x^{2}/m^{2}$, giving
\begin{equation}
    \moverset{\infty}{\munderset{j=0\,n =1}{\prod}}\! \left(1+\frac{1}{\left(2\,n -1\right)^{2}\,2^{2\,j}}\right)^{\frac{1}{j +1}} \left(1+\frac{1}{4\,n^{2}\,2^{2\,j}}\right)^{\frac{1}{\left(j +1\right) \left(j +2\right)}}
 = \frac{\sinh \! \left(\pi \right)}{\pi}\,.
\label{T2}
\end{equation}
From \eqref{Dkm} we have
\begin{equation}
    \moverset{\infty}{\munderset{n =1}{\prod}}\! \left(1+\frac{1}{\left(2\,n -1\right)^{2} \left(2^{j}\right)^{2}}\right)
 = \cosh \! \left(\frac{\pi}{2^{j +1}}\right)
\label{Math1}
\end{equation}
and
\begin{equation}
    \moverset{\infty}{\munderset{n =1}{\prod}}\! \left(1+\frac{1}{4\,n^{2} \left(2^{j}\right)^{2}}\right)
 = \frac{ 2^{j +1}}{\pi}\sinh \! \left(\frac{\pi}{2^{j +1}}\right)
\label{Math1a}
\end{equation}
leading to the identity
\begin{equation}
    \moverset{\infty}{\munderset{j =0}{\prod}}\! \left(\cosh^{\frac{1}{j +1}}\left({\pi \,x}/{2^{j +1}}\right)\right) {\left(\frac{\sinh \! \left({\pi \,x}/{2^{j +1}}\right) 2^{j +1}}{\pi \,x}\right)}^{\frac{1}{\left(j +1\right) \left(j +2\right)}}
 = \frac{\sinh \! \left(\pi \,x \right)}{\pi \,x}
\label{T2a}
\end{equation}
or its equivalent
\begin{equation}
    \moverset{\infty}{\munderset{j =1}{\sum}}\! \frac{\ln \! \left(\cosh \! \left({\pi \,x}/{2^{j +1}}\right)\right)+\ln \left(\frac{ 2^{j +1}}{\pi \,x}\sinh \left({\pi \,x}/{2^{j +1}}\right)\right)/(j+2)}{j +1}
 = 
\frac{1}{2}\ln \! \left(\frac{2\,}{\pi \,x}\sinh \left(\frac{\pi \,x}{2}\right)\right)\,.
\label{T2B}
\end{equation}
Differentiating \eqref{T2B} with respect to $x$ yields
\begin{equation}
    \moverset{\infty}{\munderset{j =1}{\sum}}\! \frac{2^{-j -1} }{j +1}\left(\tanh \! \left(\pi \,x\,2^{-j -1}\right)+\frac{1}{j +2}\coth \left(\pi \,x\,2^{-j -1}\right)\right)
 = \frac{\coth \! \left(\frac{\pi \,x}{2}\right)}{4}
\label{T2c}
\end{equation}
and further differentiating produces
\begin{equation}
    \moverset{\infty}{\munderset{j =1}{\sum}}\! \frac{2^{-2\,j -2} }{j +1}\left(\mathrm{sech}^{2} \left(\pi \,x\,2^{-j -1}\right)-\frac{1}{j +2}\mathrm{csch}^{2}\left(\pi \,x\,2^{-j -1}\right)\right)
 = -\frac{\mathrm{csch}\! \left(\frac{\pi \,x}{2}\right)^{2}}{8}\,.
\label{T2d}
\end{equation}
In \eqref{T2a} setting $x:=ix$ gives
\begin{equation}
    \moverset{\infty}{\munderset{j =1}{\prod}}\! {\left(2\,\cos \! \left(\pi \,x\,2^{-j -1}\right)\right)}^{\frac{1}{j +1}}\,{\left(\frac{\sin \! \left(\pi \,x\,2^{-j -1}\right)}{2\,\pi \,x}\right)}^{\frac{1}{\left(j +1\right) \left(j +2\right)}}
 = \sqrt{\frac{2\,}{\pi \,x}\sin \! \left(\frac{\pi \,x}{2}\right)}\,,\hspace{20pt} |x|<4.
\label{T2x}
\end{equation}
Further, employing $a_{m}=1+x^3/m^3$ with $ x\in \Re$, produces the identity
\begin{align} \label{t2a}
    \moverset{\infty}{\munderset{j =0}{\prod}}\! \left(\frac{\pi^{\frac{3}{2}}}{\Gamma \! \left(\frac{1}{2}+2^{-j -1}\,x\right) {| \Gamma \! \left(\frac{1}{2}-2^{-j -2}\,x \left(1-i\,\sqrt{3}\right)\right)|}^{2}}\right)^{\frac{1}{j +1}}\,&{\left(\frac{1}{\Gamma \! \left(1+2^{-j -1}\,x\right) {| \Gamma \! \left(1-2^{-j -2}\,x \left(1-i\,\sqrt{3}\right)\right)|}^{2}}\right)}^{\frac{1}{\left(j +1\right) \left(j +2\right)}} \\ \nonumber
& = 
\frac{1}{\Gamma \! \left(x +1\right) {| \Gamma \! \left(1-\frac{x }{2}\left(1-i\,\sqrt{3}\right)\right)|}^{2}}
\hspace{20pt}\,.
\end{align}
\subsection{A recurring family}
Consider the identity \eqref{Bcase} using $b(m)=\exp(-zm),~Re(z)>0$. The sums over $n$ on both sides can be evaluated in closed form, leading to the transformation
\begin{equation}
    \moverset{\infty}{\munderset{j =1}{\sum}}\! \frac{B_{2\,p +1}\! \left(j \right) {\mathrm e}^{z2^{j}}}{\left({\mathrm e}^{z\,2^{j +1}}-1\right) \left(2\,p +1\right)}
 = 
\moverset{\infty}{\munderset{j =1}{\sum}}\! \frac{j^{2\,p}}{{\mathrm e}^{z\,2^{j +1}}-1}\,,
\label{Eq3p3d}
\end{equation}
since $B_{2p+1}(0)=0$. We notice that, with $f(j)={B_{2p+1}(j)}/(2p+1)$ and $g(j)={1}/{\left(1-e^{z\,2^j}\right)}$ and with the finite difference operator $\Delta f(j) = f(j+1)-f(j),$ we have
\begin{equation}
\Delta f(j) = j^{2p},\,\,
\Delta g(j)= 
\frac{e^{z\,2^j}}{e^{z\,2^{j+1}}-1}
\end{equation}
so that \eqref{Eq3p3d} can be interpreted as a summation by parts identity 
\begin{equation}
\sum_{j\ge1}f(j)\Delta g(j) = - \sum_{j\ge1}g(j+1) \Delta  f(j). 
\end{equation}
\\
More interesting is the case $b(m)=\exp(-x\,m^2),~x>0$ applied to \eqref{Bcase}, leading to
\begin{equation}
    \moverset{\infty}{\munderset{j =1}{\sum}}\! \frac{B_{2\,p +1}\! \left(j \right)}{2\,p +1} \vartheta_{2}\! \left(0, {\mathrm e}^{-2^{2+2\,j}}\right)
 = 
\moverset{\infty}{\munderset{j =1}{\sum}}\! j^{2\,p} \left(\vartheta_{3}\! \left(0, {\mathrm e}^{-2^{2+2\,j}}\right)-1\right)
\label{Eq8A}
\end{equation}
by recognizing that
\begin{equation}
    \moverset{\infty}{\munderset{n =1}{\sum}}\! {\mathrm e}^{-4\,x\,n^{2} 2^{2\,j}}
 = 
\frac{1}{2}\left(\vartheta_{3}\! \left(0, {\mathrm e}^{-x\,2^{2+2\,j}}\right)-1\right)
\label{S1}
\end{equation}
and
\begin{equation}
    \moverset{\infty}{\munderset{n =1}{\sum}}\! {\mathrm e}^{-x\,\left(2\,n -1\right)^{2}\,4^{j}}
 = \frac{1}{2}\vartheta_{2}\! \left(0, {\mathrm e}^{-x\,2^{2+2\,j}}\right)\,
\label{Math2}
\end{equation}
coincide with the usual \cite[section 20.2]{NIST} definitions of the Jacobi theta functions $\vartheta_{2}$ and $\vartheta_{3}$. Continuing with the choice $b(m)=\exp(-x\,m^2)$, the two identities \eqref{S1} and \eqref{Math2} recur in many other identities presented in previous sections. Applied to \eqref{eq:1/(k+1)B} we find
\begin{equation}
    \moverset{\infty}{\munderset{j =0}{\sum}}\! \left(\frac{\vartheta_{2}\! \left(0, {\mathrm e}^{-x\,2^{2+2\,j}}\right)}{j +1}+\frac{\vartheta_{3}\! \left(0, {\mathrm e}^{-x\,2^{2+2\,j}}\right)-1}{\left(j +1\right) \left(j +2\right)}\right)
 = \vartheta_{3}\! \left(0, {\mathrm e}^{-x}\right)-1\,,
\label{Eq8B1}
\end{equation}
and applied to \eqref{Ex312a} we obtain
\begin{equation}
    \moverset{\infty}{\munderset{j =0}{\sum}}\! \left(\left(-j^{2}+j +1\right)\,\vartheta_{2}\! \left(0, {\mathrm e}^{-x\,2^{2+2\,j}}\right) +2\,j \left(\vartheta_{3}\! \left(0, {\mathrm e}^{-x\,2^{2+2\,j}}\right)-1\right)\right)
 = \vartheta_{3}\! \left(0, {\mathrm e}^{-x}\right)-1\,.
\label{Eq8D1}
\end{equation}
\section{additional identities}
We list in this section two additional identities as a consequence of the dissection identity
\begin{equation}
\prod_{m\ge1}a_{m}=\prod_{j\ge0,n\ge1}a_{\left(2n-1\right).2^{j}}\,.
\label{DissectId}
\end{equation}

\begin{prop}
The function
\begin{equation}
f\left(a,z\right)=\frac{\Gamma^{2}\left(a+1\right)}{\Gamma\left(a+1-\imath z\right)\Gamma\left(a+1+\imath z\right)}
\end{equation}
satisfies the identity
\begin{equation}
f\left(a,z\right)=\prod_{j\ge0}f\left(\frac{a}{2^{j+1}}-\frac{1}{2},\frac{z}{2^{j+1}}\right).
\end{equation}
\begin{proof}
The function $f\left(a,z\right)$ has the infinite product representation
\begin{equation}
f\left(a,z\right)=\prod_{m\ge1}\left(1+\left(\frac{z}{m+a}\right)^{2}\right).
\end{equation}
We deduce from the identity \eqref{DissectId}
that
\begin{align*}
f\left(a,z\right) & =\prod_{j\ge0,n\ge1}\left(1+\left(\frac{z}{\left(2n-1\right).2^{j}+a}\right)^{2}\right)\\
 & =\prod_{j\ge0,n\ge1}\left(1+\left(\frac{z}{n.2^{j+1}+a-2^{j}}\right)^{2}\right)\\
 & =\prod_{j\ge0,n\ge1}\left(1+\left(\frac{\frac{z}{2^{j+1}}}{n+\left(\frac{a}{2^{j+1}}-\frac{1}{2}\right)}\right)^{2}\right)\\
 & =\prod_{j\ge0}f\left(\frac{a}{2^{j+1}}-\frac{1}{2},\frac{z}{2^{j+1}}\right).
\end{align*}
\end{proof}
Explicitly,

\begin{equation}
\frac{\Gamma^{2}\left(a+1\right)}{\Gamma\left(a+1-\imath z\right)\Gamma\left(a+1+\imath z\right)}=\prod_{j\ge0}\frac{\Gamma^{2}\left(\frac{a}{2^{j+1}}+\frac{1}{2}\right)}{\Gamma\left(\frac{a}{2^{j+1}}+\frac{1}{2}-\imath\frac{z}{2^{j+1}}\right)\Gamma\left(\frac{a}{2^{j+1}}+\frac{1}{2}+\imath\frac{z}{2^{j+1}}\right)}.
\label{P9p1}
\end{equation}
If $z=x,~x \in \Re$, because $\Gamma(x)$ is its own complex conjugate, \eqref{P9p1} can be rewritten
\begin{equation}
\frac{\Gamma^{2}\left(a+1\right)}{\left|\Gamma\left(a+1+\imath x\right)\right|^{2}}=\prod_{j\ge0}\frac{\Gamma^{2}\left(\frac{a}{2^{j+1}}+\frac{1}{2}\right)}{\left|\Gamma\left(\frac{a}{2^{j+1}}+\frac{1}{2}+\imath\frac{x}{2^{j+1}}\right)\right|^{2}}.
\label{P9p1a}
\end{equation}
With $a=0,$ we have 
\begin{equation}
f\left(0,z\right)=\frac{\sinh\left(\pi z\right)}{\pi z},\,\,f\left(-\frac{1}{2},z\right)=\frac{1}{\cosh\left(\pi z\right)}
\end{equation}
and we deduce
\begin{equation}
\frac{\sinh\left(\pi z\right)}{\pi z}=\prod_{j\ge0}{\cosh\left(\frac{\pi z}{2^{j+1}}\right)},
\end{equation}
reproducing the listed identity \cite[Eq. (92.1.3)]{Hansen}. \newline 

When written in sum form, \eqref{P9p1} becomes
\begin{align}\label{P9p1b}
\moverset{\infty}{\munderset{j =0}{\sum}}\! \left(2\,\ln \! \left(\Gamma \! \left(\frac{a}{2^{j +1}}+\frac{1}{2}\right)\right)-\right. & \left. \ln \! \left(\Gamma \! \left(\frac{a}{2^{j +1}}+\frac{1}{2}-\frac{i\,z}{2^{j +1}}\right)\right)-\ln \! \left(\Gamma \! \left(\frac{a}{2^{j +1}}+\frac{1}{2}+\frac{i\,z}{2^{j +1}}\right)\right)\right) \\ \nonumber
 &= 
\ln \! \left(\frac{\Gamma \! \left(a +1\right)^{2}}{\Gamma \! \left(-i\,z +a +1\right) \Gamma \! \left(i\,z +a +1\right)}\right)\,,
\end{align}
so that, by either differentiating with respect to $a$ or $z$, and demanding that $z:=x \in \Re $, we respectively find two Euler sums:
\begin{equation}
    \moverset{\infty}{\munderset{j =0}{\sum}}\; 2^{-j}\left(\psi \! \left(a\,2^{-j -1}+\frac{1}{2}\right)-\Re \! \left(\psi \! \left(\frac{i\,2^{-j}\,x}{2}+\frac{1}{2}+a\,2^{-j -1}\right)\right)\right) 
 = 
2\,\psi \! \left(a +1\right)-2\,\Re \! \left(\psi \! \left(i\,x +a +1\right)\right)
\label{P9a}
\end{equation}
\end{prop}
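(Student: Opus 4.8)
The plan is to peel off, one after another, the displayed consequences of the multiplicative identity for $f\left(a,z\right)$ established above, the only genuinely new ingredient being a differentiation under an infinite sum at the very end. First, \eqref{P9p1} is merely that identity with the definition of $f$ substituted, so nothing is required there. For \eqref{P9p1a} I would set $z=x$ real and invoke $\overline{\Gamma\left(\bar w\right)}=\Gamma\left(w\right)$, whence $\Gamma\left(c+\imath x\right)\Gamma\left(c-\imath x\right)=\left|\Gamma\left(c+\imath x\right)\right|^{2}$ for each real $c$ occurring. For the $\sinh/\cosh$ relation I would specialise $a=0$: then $f\left(0,z\right)=\prod_{m\ge1}\left(1+z^{2}/m^{2}\right)=\sinh\left(\pi z\right)/\left(\pi z\right)$ from the Weierstrass product, while the $j$-th factor on the right collapses to $f\left(-\frac{1}{2},\,z/2^{j+1}\right)$, which the reflection formula $\Gamma\left(\frac{1}{2}+w\right)\Gamma\left(\frac{1}{2}-w\right)=\pi/\cos\left(\pi w\right)$ evaluates in closed form, reproducing \cite[Eq. (92.1.3)]{Hansen}.

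The substantive part is the passage to the logarithmic form \eqref{P9p1b} and then to \eqref{P9a}. For \eqref{P9p1b} I would first note that the product in \eqref{P9p1} converges: writing $\epsilon_{j}=2^{-j-1}$ and expanding each $\ln\Gamma$ about its common limiting argument $\frac{1}{2}$ through $\ln\Gamma\left(\frac{1}{2}+u\right)=\ln\Gamma\left(\frac{1}{2}\right)+\psi\left(\frac{1}{2}\right)u+\frac{1}{2}\psi'\left(\frac{1}{2}\right)u^{2}+O\left(u^{3}\right)$, the $O\left(1\right)$ and $O\left(\epsilon_{j}\right)$ contributions of the $j$-th factor cancel identically, leaving an $O\left(\epsilon_{j}^{2}\right)=O\left(2^{-2j}\right)$ term. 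Hence, choosing the principal branch of the logarithm (legitimate since for large $j$ all arguments sit near $\frac{1}{2}$, and on the real axis with $a$ away from the negative integers and $|x|$ bounded all arguments stay in the right half-plane), $\ln$ distributes over the product and \eqref{P9p1b} follows.

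Finally, to obtain \eqref{P9a} I would differentiate \eqref{P9p1b} with respect to $a$. The right-hand side is immediate, $\partial_{a}\ln\Gamma\left(a+1+c\right)=\psi\left(a+1+c\right)$, and once $z=x$ is real one has $\psi\left(a+1-\imath x\right)+\psi\left(a+1+\imath x\right)=2\Re\,\psi\left(a+1+\imath x\right)$. On the left, differentiating the $j$-th summand produces $2^{-j-1}\bigl(2\psi\left(a\epsilon_{j}+\frac{1}{2}\right)-\psi\left(a\epsilon_{j}+\frac{1}{2}-\imath z\epsilon_{j}\right)-\psi\left(a\epsilon_{j}+\frac{1}{2}+\imath z\epsilon_{j}\right)\bigr)$; the bracket tends to $2\psi\left(\frac{1}{2}\right)-2\psi\left(\frac{1}{2}\right)=0$ and, by a second application of Taylor's theorem to $\psi$ at $\frac{1}{2}$, its $O\left(\epsilon_{j}\right)$ part cancels as well, so the differentiated summand is $O\left(2^{-3j}\right)$, absolutely and locally uniformly summable on the common domain of analyticity. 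This licenses interchanging $\partial_{a}$ with $\sum_{j\ge0}$; then setting $z=x$, folding the conjugate pair into $2\Re\,\psi$, and using $2\cdot2^{-j-1}=2^{-j}$ gives \eqref{P9a} exactly. The companion Euler sum advertised in the text follows identically from $\partial_{z}$, where the conjugate pair folds instead into an imaginary part.

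The single real obstacle is this interchange of differentiation and summation. Everything hinges on the double cancellation — both the $O\left(1\right)$ and the $O\left(\epsilon_{j}\right)$ parts of the relevant $\psi$-combination vanish because the three arguments are symmetric about $a\epsilon_{j}+\frac{1}{2}$ — which is what drags the differentiated series down to $O\left(2^{-3j}\right)$; without that observation the logarithmic growth of $\psi$ would leave the term-by-term derivative only conditionally controlled at best.
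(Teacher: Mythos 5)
There is a genuine gap: you never prove the central claim of the proposition, namely the identity $f\left(a,z\right)=\prod_{j\ge0}f\left(\frac{a}{2^{j+1}}-\frac{1}{2},\frac{z}{2^{j+1}}\right)$. You call it ``the multiplicative identity for $f\left(a,z\right)$ established above'' and treat \eqref{P9p1} as ``merely that identity with the definition of $f$ substituted,'' but nothing of the sort is established earlier in the paper: what is available above is only the abstract dissection identity \eqref{DissectId}, $\prod_{m\ge1}a_{m}=\prod_{j\ge0,n\ge1}a_{\left(2n-1\right).2^{j}}$, for a generic sequence. Passing from that to the functional identity for $f$ is exactly the content of the paper's proof, and it requires two ingredients you omit: (i) the infinite product representation $f\left(a,z\right)=\prod_{m\ge1}\bigl(1+\bigl(\tfrac{z}{m+a}\bigr)^{2}\bigr)$, valid for general $a$ (you invoke the Weierstrass product only in the special case $a=0$); and (ii) the reindexing $\left(2n-1\right).2^{j}+a=2^{j+1}\bigl(n+\tfrac{a}{2^{j+1}}-\tfrac{1}{2}\bigr)$, which identifies the inner product over $n$ in the dissected product as $f\bigl(\tfrac{a}{2^{j+1}}-\tfrac{1}{2},\tfrac{z}{2^{j+1}}\bigr)$. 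Without these two steps the proposition is assumed rather than proved; with them the proof is only a few lines, and since you already use the relevant product representation at $a=0$, the repair is straightforward.

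On the remaining displayed consequences your argument is correct and in fact more careful than the paper, which passes from \eqref{P9p1} to \eqref{P9p1b} and \eqref{P9a} without justification: your observation that the symmetry of the three arguments about $a\,2^{-j-1}+\tfrac{1}{2}$ kills both the constant and the linear terms, leaving $O\!\left(2^{-2j}\right)$ summands for the logarithmic form and $O\!\left(2^{-3j}\right)$ summands after differentiation, is exactly what legitimizes taking logarithms of the infinite product and interchanging $\partial_{a}$ (or $\partial_{z}$) with the sum. So the supplementary analysis is a genuine improvement in rigor; the missing piece is the core identity itself, which is the part the paper actually proves.
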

and
\begin{equation}
    \moverset{\infty}{\munderset{j =0}{\sum}}\! 2^{-j}\,\Im \! \left(\psi \! \left(\frac{i\,2^{-j}\,x}{2}+\frac{1}{2}+a\,2^{-j -1}\right)\right)
 = 2\,\Im \! \left(\psi \! \left(i\,x +a +1\right)\right)\,.
\label{P9b}
\end{equation}
\begin{prop}
The function
\begin{equation}
g\left(a,b,z\right)=\frac{b}{\left(b-z\right)}\frac{\Gamma\left(a-\sqrt{a^{2}-b}\right)\Gamma\left(a+\sqrt{a^{2}-b}\right)}{\Gamma\left(a-\sqrt{a^{2}-b+z}\right)\Gamma\left(a+\sqrt{a^{2}-b+z}\right)}
\end{equation}
satisfies the identity
\begin{equation}
g\left(a,b,z\right)=\prod_{j\ge0}g\left(\frac{a}{2^{j}}-1,1-\frac{a}{2^{j+1}}+\frac{b}{2^{2j+2}},\frac{z}{2^{2j+2}}\right).
\end{equation}

\end{prop}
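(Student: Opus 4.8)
The plan is to follow the template of the companion proposition for $f(a,z)$: first exhibit $g$ as an infinite product over a single index $m\ge1$, then substitute that product into the dissection identity \eqref{DissectId}, and finally reindex the resulting double product so that each inner block over $n$ is again a value of $g$.

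The first step is a product representation of $g$. Set $\alpha=\sqrt{a^{2}-b}$ and $\beta=\sqrt{a^{2}-b+z}$, so that $a\pm\alpha$ are the roots of $t^{2}-2at+b$ and $a\pm\beta$ the roots of $t^{2}-2at+(b-z)$; both pairs sum to $2a$, with products $b$ and $b-z$ respectively. Inserting these four numbers into the Weierstrass product $1/\Gamma(w)=we^{\gamma w}\prod_{m\ge1}(1+w/m)e^{-w/m}$, the equality of the two sums cancels every exponential factor, leaving
\begin{equation}
\frac{\Gamma(a-\alpha)\Gamma(a+\alpha)}{\Gamma(a-\beta)\Gamma(a+\beta)}=\frac{b-z}{b}\prod_{m\ge1}\frac{(m+a)^{2}-\beta^{2}}{(m+a)^{2}-\alpha^{2}}.
\end{equation}
Multiplying by the prefactor $b/(b-z)$ and using $\beta^{2}-\alpha^{2}=z$ then gives the clean form
\begin{equation}
g(a,b,z)=\prod_{m\ge1}\left(1-\frac{z}{(m+a)^{2}-(a^{2}-b)}\right),
\end{equation}
which is the $g$-analogue of the representation $\prod_{m\ge1}\bigl(1+(z/(m+a))^{2}\bigr)$ used in the previous proposition (indeed $f(a,z)=g(a,a^{2},-z^{2})$).

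The second step applies \eqref{DissectId} with $a_{m}=1-z/\bigl((m+a)^{2}-(a^{2}-b)\bigr)$, turning the single product into $\prod_{j\ge0,n\ge1}a_{(2n-1).2^{j}}$. For each fixed $j$ one writes the inner index in factored form, $(2n-1)2^{j}+a=2^{j+1}\bigl(n+c_{j}\bigr)$ with the appropriate affine shift $c_{j}$, pulls the common factor $2^{2(j+1)}$ out of the denominator, and completes the square in $n$. Matching coefficients then identifies the inner product over $n$ with $g$ evaluated at the rescaled triple of arguments (with $z$ replaced by $z/2^{2(j+1)}$), and the product over $j$ gives the asserted identity. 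Convergence of the rearranged double product is inherited from that of $\prod_{m\ge1}a_{m}$ exactly as in \cite[Appendix A]{Milgram}.

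The step I expect to be the real obstacle is this last reindexing: one must push the affine shift and the completion of the square through the substitution $m\mapsto(2n-1)2^{j}$ and verify that, after extracting the power of $2$, the surviving quadratic in $n$ has precisely the shape $(n+a')^{2}-(a'^{2}-b')$ with $a'$ and $b'$ the parameters displayed in the statement -- a short but error-prone bookkeeping exercise in tracking the signs and the powers of $2$. Everything else is a routine combination of the Weierstrass product with the dissection identity already established above.
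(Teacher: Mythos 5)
Your strategy coincides with the paper's: obtain the product representation $g(a,b,z)=\prod_{m\ge1}\bigl(1-z/(m^{2}+2am+b)\bigr)$ (your Weierstrass-product derivation of this is fine, and more explicit than the paper's), feed it into the dissection identity \eqref{DissectId}, and reindex. The gap is that you stop at exactly the step you yourself flag as ``the real obstacle'': you assert that matching coefficients identifies the inner product over $n$ with $g$ at a rescaled triple of arguments, but you never perform the match, and that computation is the entire content of the proposition. Carrying it out with your own substitution $(2n-1)2^{j}+a=2^{j+1}\bigl(n+\frac{a}{2^{j+1}}-\frac{1}{2}\bigr)$ gives
\begin{equation*}
m^{2}+2am+b=\bigl((2n-1)2^{j}+a\bigr)^{2}-(a^{2}-b)=2^{2j+2}\left(n^{2}+n\Bigl(\frac{a}{2^{j}}-1\Bigr)+\frac{1}{4}-\frac{a}{2^{j+1}}+\frac{b}{2^{2j+2}}\right),
\end{equation*}
and identifying the bracketed quadratic with $n^{2}+2a'n+b'$ forces $a'=\frac{a}{2^{j+1}}-\frac{1}{2}$ and $b'=\frac{1}{4}-\frac{a}{2^{j+1}}+\frac{b}{2^{2j+2}}$, not the values $\frac{a}{2^{j}}-1$ and $1-\frac{a}{2^{j+1}}+\frac{b}{2^{2j+2}}$ appearing in the statement.

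So a completed version of your argument proves
\begin{equation*}
g\left(a,b,z\right)=\prod_{j\ge0}g\left(\frac{a}{2^{j+1}}-\frac{1}{2},\ \frac{1}{4}-\frac{a}{2^{j+1}}+\frac{b}{2^{2j+2}},\ \frac{z}{2^{2j+2}}\right),
\end{equation*}
which is not the displayed identity; the statement itself appears to be in error (the paper's own proof replaces $2^{2j}/2^{2j+2}$ by $1$ instead of $\frac{1}{4}$, and reads the linear coefficient $n\bigl(\frac{a}{2^{j}}-1\bigr)$ as $a'$ rather than $2a'$). A consistency check you were one step away from making: since $f(a,z)=g(a,a^{2},-z^{2})$, the corrected parameters specialize, for $b=a^{2}$ and $z\mapsto-z^{2}$, exactly to the arguments $\bigl(\frac{a}{2^{j+1}}-\frac{1}{2},\frac{z}{2^{j+1}}\bigr)$ of the companion proposition for $f$, whereas the parameters in the statement do not. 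Your proposal therefore needs the deferred bookkeeping done explicitly, and once done it establishes a corrected version of the proposition rather than the one stated.
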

\begin{proof}
The function $g\left(a,b,z\right)$ has infinite product representation
\begin{align*}
g\left(a,b,z\right) & =\prod_{m\ge1}\left(1-\frac{z}{m^{2}+2am+b}\right).
\end{align*}
We deduce
\begin{align*}
g\left(a,b,z\right) & =\prod_{n\ge1,j\ge0}\left(1-\frac{z}{\left(2n-1\right)^{2}2^{2j}+2a\left(2n-1\right)2^{j}+b}\right)\\
 & =\prod_{n\ge1,j\ge0}\left(1-\frac{z}{2^{2j+2}n^{2}-2^{2j+2}n+2^{2j}+2^{j+2}an+b-2^{j+1}a}\right)\\
 & =\prod_{n\ge1,j\ge0}\left(1-\frac{z}{2^{2j+2}n^{2}+n\left(-2^{2j+2}+2^{j+2}a\right)+\left(2^{2j}+b-2^{j+1}a\right)}\right)\\
 & =\prod_{n\ge1,j\ge0}\left(1-\frac{\frac{z}{2^{2j+2}}}{n^{2}+n\left(\frac{a}{2^{j}}-1\right)+\left(1+\frac{b}{2^{2j+2}}-\frac{a}{2^{j+1}}\right)}\right)\\
 & =\prod_{j\ge0}g\left(\frac{a}{2^{j}}-1,1-\frac{a}{2^{j+1}}+\frac{b}{2^{2j+2}},\frac{z}{2^{2j+2}}\right).
\end{align*}

\end{proof}
Explicitly,
\begin{align*}
&\frac{b}{\left(b-z\right)}\frac{\Gamma\left(a-\sqrt{a^{2}-b}\right)\Gamma\left(a+\sqrt{a^{2}-b}\right)}{\Gamma\left(a-\sqrt{a^{2}-b+z}\right)\Gamma\left(a+\sqrt{a^{2}-b+z}\right)}\\
&=\prod_{j\ge0}\frac{1-\frac{a}{2^{j+1}}+\frac{b}{2^{2j+2}}}{\left(1-\frac{a}{2^{j+1}}+\frac{b-z}{2^{2j+2}}\right)}\frac{\Gamma\left(\frac{a}{2^{j}}-1-\sqrt{\left(\frac{a}{2^{j}}-1\right)^{2}-\left(1-\frac{a}{2^{j+1}}+\frac{b}{2^{2j+2}}\right)}\right)}{\Gamma\left(\frac{a}{2^{j}}-1-\sqrt{\left(\frac{a}{2^{j}}-1\right)^{2}-\left(1-\frac{a}{2^{j+1}}+\frac{b}{2^{2j+2}}\right)+\frac{z}{2^{2j+2}}}\right)}\\
&\times\prod_{j\ge0}\frac{\Gamma\left(\frac{a}{2^{j}}-1+\sqrt{\left(\frac{a}{2^{j}}-1\right)^{2}-\left(1-\frac{a}{2^{j+1}}+\frac{b}{2^{2j+2}}\right)}\right)}{\Gamma\left(\frac{a}{2^{j}}-1+\sqrt{\left(\frac{a}{2^{j}}-1\right)^{2}-\left(1-\frac{a}{2^{j+1}}+\frac{b}{2^{2j+2}}\right)+\frac{z}{2^{2j+2}}}\right)}
\end{align*}
\begin{appendix} \label{sec:AppA}
\section{Proof of identity \eqref{Cj}} \label{sec:Proof8p34}
Identity \eqref{Cj} claims that
\begin{equation}
\sum_{j\ge0}2^{j}\frac{e^{-k\pi\left(2^{j}+1\right)}}{\cosh\left(k\pi2^{j}\right)}=\frac{e^{-2k\pi}}{\sinh\left(k\pi\right)}.
\end{equation}
Denoting $q=e^{-k\pi},$ this is
\begin{equation}
\sum_{j\ge0}2^{j+1}\frac{q^{2^{j}+1}}{q^{2^{j}}+q^{-2^{j}}}=\frac{2q^{2}}{q^{-1}-q}
\end{equation}
or
\begin{equation}
\sum_{j\ge0}2^{j+1}\frac{1}{q^{-2^{j+1}}+1}=\frac{2q^{2}}{1-q^{2}}.
\end{equation}
Substituting $q\mapsto\frac{1}{q}$ produces
\begin{equation}
\sum_{j\ge0}2^{j+1}\frac{1}{q^{2^{j+1}}+1}=\frac{2}{q^{2}-1}.
\label{AltEq}    
\end{equation}

We know from \eqref{eq:Teixeira} that
\begin{equation}
\frac{q}{1-q}=\sum_{j\ge0}2^{j}\frac{q^{2^{j}}}{q^{2^{j}}+1}.
\end{equation}
Substituting $q\mapsto\frac{1}{q}$ produces
\begin{equation}
\frac{1}{q-1}=\sum_{j\ge0}2^{j}\frac{1}{q^{2^{j}}+1}
\end{equation}
so that
\begin{equation}
\sum_{j\ge0}2^{j+1}\frac{1}{q^{2^{j+1}}+1}=\frac{1}{q-1}-\frac{1}{1+q}=\frac{2}{q^{2}-1},
\end{equation}
which is \eqref{AltEq}.
\end{appendix}
\bibliographystyle{unsrtnat}

\begin{thebibliography}{1}

\bibitem{Milgram}Michael Milgram, A Curious Trigonometric Infinite
Product in Context, arXiv:2303.08628v1, Scientia Volume 33 (to be published)

\bibitem{Borwein}J.M. Borwein and P.B. Borwein, Pi and the AGM, a study in analytic number theory and computational complexity, John Wiley, 1987

\bibitem{Comtet}L. Comtet, Advanced Combinatorics, D. Reidel Publishing
Company, 1974

\bibitem{Dieckmann}A. Dieckmann, Collection of Infinite Products and Series, 
\newblock https://www-elsa.physik.uni-bonn.de/~dieckman/InfProd/InfProd.html
Last updated 9 May 2023

\bibitem{Teixeira}F.G. Teixeira, Sur la s\'{e}rie de Lagrange et ses
applications, M\'{e}moires de l'Acad\'{e}mie Royale de Belgique, 1904 (or
1907 ???)

\bibitem{Maple23}
Maplesoft, a division of Waterloo Maple Inc., version 2023.
\newblock {\em Maple.}

\bibitem{Math23}
Wolfram Research, Champaign, Illinois.
\newblock {\em Mathematica, version 13.2}, 2023.


\bibitem{Hansen}
Eldon~R Hansen.
\newblock {\em {A Table of Series and Products}}.
\newblock Prentice-Hall Inc., Englewood Cliffs, N.J., 1975.

\bibitem{InfProduct}
Eric ~W. Weisstein.
\newblock Infinite Product. mathworld -- a Wolfram web resource.
\newblock Retrieved from
  https://mathworld.wolfram.com/InfiniteProduct.html.

\bibitem{NIST}
F.~W.~J. Olver, D.~W. Lozier, R.~F. Boisvert, and C.~W. Clark, editors.
\newblock {\em {NIST Handbook of Mathematical Functions}}.
\newblock Cambridge University Press, New York, NY, 2010.
\newblock Print companion to \cite{NIST:DLMF}.

\bibitem{NIST:DLMF}
{NIST Digital Library of Mathematical Functions}.
\newblock http://dlmf.nist.gov/, Release 1.0.9 of 2014-08-29.

\bibitem{beauregard_dobrushkin_2016}
Raymond~A. Beauregard and Vladimir~A. Dobrushkin.
\newblock Multisection of series.
\newblock {\em The Mathematical Gazette}, 100(549):460-470, 2016.
\newblock doi:10.1017/mag.2016.111.

\bibitem{ChamberStroud}
Marc Chamberland and Armin Stroud.
\newblock On gamma quotients and infinite products.
\newblock {\em Advances in Applied Mathematics}, 51(5):546--562, 2013.
\newblock https://doi.org/10.1016/j.aam.2013.07.003.

\bibitem{Sriv&Choi}
H.M. Srivastava and Junesang Choi.
\newblock {\em {Zeta and q-Zeta Functions and Associated Series and
  Integrals}}.
\newblock Elsevier, 32 Jamestown Rd.,London, NW1 7BY, first edition, 2012.

\bibitem{Somos}
Michael Somos.
\newblock A Multisection of q-Series.
\newblock https://grail.cba.csuohio.edu/~somos/multiq.html.
\end{thebibliography}

\end{document}